\documentclass[12pt, noamsfonts, reqno]{amsart}

\usepackage[margin=2.8cm]{geometry}
\usepackage{booktabs}
\usepackage[USenglish]{babel}
\usepackage{microtype}
\usepackage{mathtools}
\mathtoolsset{showonlyrefs}

\usepackage[T1]{fontenc}
\usepackage[
  theoremfont,
  varg,
  vvarbb,
  smallerops,
  slantedGreek,
]{newtx}

\usepackage{tikz-cd}

\usepackage{hyperref}
\usepackage{orcidlink}
\usepackage[msc-links]{amsrefs}

\theoremstyle{plain}
\newtheorem{proposition}{Proposition}
\newtheorem{corollary}[proposition]{Corollary}
\newtheorem{theorem}[proposition]{Theorem}
\newtheorem{lemma}[proposition]{Lemma}
\newtheorem*{MT}{Main Theorem}

\theoremstyle{definition}
\newtheorem{definition}{Definition}
\newtheorem{example}{Example}

\theoremstyle{remark}
\newtheorem{remark}{Remark}

\DeclareMathOperator{\J}{J}
\DeclareMathOperator{\Aff}{Aff}
\DeclareMathOperator{\Aut}{Aut}
\DeclareMathOperator{\End}{End}
\DeclareMathOperator{\im}{Im}
\DeclareMathOperator{\Irr}{Irr}
\DeclareMathOperator{\pr}{P}
\DeclareMathOperator{\GL}{GL}
\DeclareMathOperator{\Gal}{Gal}
\DeclareMathOperator{\s}{s}
\DeclareMathOperator{\tr}{Tr}
\DeclareMathOperator{\Hom}{Hom}

\DeclareMathOperator{\Ind}{Ind}
\DeclareMathOperator{\Stab}{Stab}
\DeclareMathOperator{\Fix}{Fix}
\DeclareMathOperator{\id}{id}
\DeclareMathOperator{\nd}{\sigma_0}
\DeclareMathOperator*{\lcm}{lcm}

\newcommand{\GF}[1]{\mathbb{F}_{#1}}
\newcommand{\CC}{\mathbb{C}}
\newcommand{\QQ}{\mathbb{Q}}
\newcommand{\ZZ}{\mathbb{Z}}
\newcommand{\tra}{T}
\newcommand{\lin}{\Lambda}
\newcommand{\st}{\theta}
\newcommand{\ppi}{\piup}
\newcommand{\Sym}{\mathfrak{S}}
\newcommand{\GA}[1]{\Aff(\GF{#1})}

\DeclarePairedDelimiter{\card}{\lvert}{\rvert}
\DeclarePairedDelimiter{\ord}{\lvert}{\rvert}
\DeclarePairedDelimiter{\gen}{\langle}{\rangle}
\DeclarePairedDelimiterX{\inpr}[2]{\langle}{\rangle}{#1,#2}
\DeclarePairedDelimiterX{\extdeg}[2]{[}{]}{#1:#2}
\DeclarePairedDelimiterX{\ind}[2]{[}{]}{#1:#2}
\DeclarePairedDelimiter{\paren}{(}{)}

\begin{document}

\title[Group algebra decompositions not affordable by Prym varieties]{Jacobian
  varieties with group algebra decomposition not affordable by Prym varieties}

\author[Benjamín M. Moraga]{Benjamín M.\@ Moraga
  \orcidlink{0000-0003-3211-0637}}

\date{February 10th, 2024}

\email{\href{mailto:benjamin.baeza@ufrontera.cl}{benjamin.baeza@ufrontera.cl}}

\thanks{Partially supported by ANID Fondecyt grants 1190991.}

\address{Departamento de Matemática y Estadística, Universidad de La Frontera,
  Temuco, Chile}

\subjclass[2020]{Primary 14H40; Secondary 14H30}
\keywords{Jacobians, Prym varieties, Coverings of curves}

\begin{abstract}
  The action of a finite group \(G\) on a compact Riemann surface \(X\)
  naturally induces another action of \(G\) on its Jacobian variety
  \(\operatorname{J}(X)\). In many cases, each component of the group algebra
  decomposition of \(\operatorname{J}(X)\) is isogenous to a Prym varieties of
  an intermediate covering of the Galois covering \(\pi_G\colon X \to X/G\); in
  such a case, we say that the group algebra decomposition is affordable by
  Prym varieties. In this article, we present an infinite family of groups that
  act on Riemann surfaces in a manner that the group algebra decomposition of
  \(\operatorname{J}(X)\) is not affordable by Prym varieties; namely, affine
  groups \(\operatorname{Aff}(\mathbb{F}_q)\) with some exceptions: \(q = 2\),
  \(q = 9\), \(q\) a Fermat prime, \(q = 2^n\) with \(2^n-1\) a Mersenne prime
  and some particular cases when \(X/G\) has genus \(0\) or \(1\). In each one
  of this exceptional cases, we give the group algebra decomposition of
  \(\operatorname{J}(X)\) by Prym varieties.
\end{abstract}

\maketitle


\section{Introduction}
\label{sec:introduction}

Let \(X\) be a compact Riemann surface and let \(G\) be a finite group acting
faithfully on \(X\); namely, there is a monomorphism \(G \to \Aut(X)\) (we denote
the images of this map just as elements of \(g\)). Set \(Y \coloneq X/G\) so the
quotient map \(\pi_G \colon X \to Y\) is a Galois covering between compact Riemann
surfaces. The action of \(G\) on \(X\) naturally induces an action on the
Jacobian variety \(\J(X)\) of the curve \(X\) and, moreover, this action
induces an homomorphism \(\QQ[G] \to \End_{\QQ}(\J(X))\), where
\(\End_{\QQ}(\J(X)) \coloneq \End(\J(X)) \otimes_{\ZZ} \QQ\), which yields a
\(G\)-equivariant isogeny
\[
  \J(X) \sim B_1^{n_1} \times \cdots \times B_r^{n_r}
\]
called the \emph{group algebra decomposition} of \(\J(X)\) (we give more
details about this decomposition in section~\ref{sec:background}, for a broader
discussion see \cite{book:lange22}*{section~2.9}). Each group algebra component
\(B_i\) corresponds to a rational irreducible representation of \(G\), if we
let \(B_1\) corresponds to the trivial representation, then \(B_1 \sim
\J(Y)\). The other group algebra components, namely \(B_i\) for
\(i=2,\ldots,r\), may sometimes be described as Prym varieties of intermediate
coverings of \(\pi_G\) up to isogeny, we discuss this phenomenon in
section~\ref{sec:prym-varieties}. Its foundational example is studied in
\cite{art:recillas1974}, where the Jacobian of a tetragonal curve is isomorphic
to the Prym variety of a double cover of a trigonal curve, this is called the
Recillas trigonal construction. There are several cases in which all the group
algebra components are isogenous to Prym varieties of intermediate coverings,
in such a case we will say that the group algebra decomposition is affordable
by Prym varieties (see Definition~\ref{def:affordable}). The general theory of
this phenomenon was first discussed in \cite{art:carocca2006} and was latter
summarized in \cite{book:lange22} with an extensive study of the group algebra
decomposition for the Galois closure of coverings of degree \(2\), \(3\) and
\(4\); some families of cyclic and dihedral groups where also studied. In all
these examples, the group algebra decomposition is affordable by Prym
varieties.  In \cite{art:moraga23}, the Galois closure of a fivefold covering
was studied, for the particular case where \(G = \Sym_5\), the symmetric group
of degree \(5\), there is a group algebra component which is not isogenous to
the Prym variety of any intermediate covering (in \cite{art:lange2004} the Prym
variety of a \emph{pair of coverings} is defined in order to describe this
component). It is also worth mentioning that in
\cite{art:carocca2006}*{appendix~B} an example is given in which two group
algebra components are neither a Prym variety of an intermediate covering nor
an intersection of two of them.

In this article we give an infinite family of finite groups, the full one
dimensional affine groups \(\GA{q}\) over a Galois \(\GF{q}\) field with
\(q = p^n\) and \(p\) prime, such that, for most of them, the group algebra
decomposition is not affordable by Prym varieties. To be precise, we now state
the main result of this work, proven in section~\ref{sec:galois-covers}.

\begin{MT}[Theorem~\ref{thm:main}]
  The group algebra decomposition of \(\J(X)\) is affordable by Prym varieties
  if and only if at least one of the following three conditions is met:
  \begin{enumerate}
  \item \label{item:mt-1} The integer \(q\) is equal to \(2\) or \(9\), is a
    Fermat prime or \(q-1\) is a Mersenne prime.
  \item \label{item:mt-2} The signature of \(\pi_G\) is of the form
    \begin{equation}
      (1;p,\ldots,p)\ \text{or}\ (0;p,\ldots,p,q-1,q-1).
    \end{equation}
  \item \label{item:mt-3} The integer \(q-1\) is equal to \(d^\mu e^\nu\), where
    \(d\) and \(e\) are different prime numbers and \(\mu\) and \(\nu\) are
    positive integers, and the signature of \(\pi_G\) is of the form
    \begin{equation}
      \begin{gathered}
        (1;p,\ldots,p,d,\ldots,d), (0;p,\ldots,p,q-1,q-1,d,\ldots,d), \\
        (0;p,\ldots,p,q-1, e^\nu, d, \ldots, d)\ \text{or}\ (0;p,\ldots,p,e^\nu, e^\nu, d, \ldots, d)
      \end{gathered}
    \end{equation}
    (the last two signatures are only possible if \(\mu = 1\)).
  \end{enumerate}
\end{MT}
Note that in the cases enumerated in items~\ref{item:mt-2}~and~\ref{item:mt-3}
the curve \(Y\) is either the Riemann sphere or a torus, and, the ramification
of \(\pi_G\) is very restricted; that is, for most signatures the group algebra
decomposition is not affordable by Prym varieties. With respect to
item~\ref{item:mt-1}, recall that the only known Fermat primes are the first
five Fermat numbers and, until 2024, only forty-eight Mersenne primes are
confirmed (see \url{https://www.mersenne.org/primes/}). Summarizing, for most
values of \(q\), the group algebra decomposition of \(\J(X)\) is not affordable
by Prym varieties. For example, setting \(q \coloneqq p^n\) for any \(p > 2\)
yields an infinite family of groups with that property. In the cases where the
decomposition is affordable by Prym varieties, we give it in
Corollary~\ref{cor:MT}.

In section~\ref{sec:background}, we fix notation and state some properties on
rational irreducible representations, the group algebra decomposition, Galois
coverings and Prym varieties. Section~\ref{sec:irrC} is about complex
irreducible representations of \(\GA{q}\). They were first inductively
classified in \cite{art:faddeev1976} and their characters were constructed,
based on this classification, in \cite{art:siegel1992}. Also, some particular
cases where explicitly computed in \cite{art:iranmanesh1997}. However, no
explicit character table for dimension one was found on the literature and,
being it easily computable through the Wigner--Mackey method, we compute it
explicitly. Nevertheless, compare Theorem~\ref{thm:comp-rep-aff} and
\cite{art:siegel1992}*{Theorem~3.11}. In section~\ref{sec:irrQ}, we describe
the rational irreducible representations of \(\GA{q}\) and in
section~\ref{sec:subgroups} we classify its subgroups. Finally, in
section~\ref{sec:galois-covers} we prove our main result.

\section{Background}
\label{sec:background}

\subsection{Rational irreducible representations}
\label{sec:rational-rep}

For the whole of this section, let \(G\) denote a finite group. Let
\(\varsigma \colon G \to \GL(V)\) be a complex irreducible representation of
\(G\), \(L\) its \emph{field of definition} and \(K\) its \emph{character
  field}, so we have \(K\subseteq L\). The integer
\(\s(\varsigma) \coloneq \extdeg{L}{K}\) is called the \emph{Schur index} of the
representation (see \cite{book:serre77}*{section~12.2}). According to
\cite{book:lange22}*{equation~(2.19)}, there is a unique (up to isomorphism)
rational irreducible representation \(\rho\colon G\to \GL(W)\) such that
\begin{align}
  \label{eq:rat-decomp}
  \rho \otimes \CC &\cong \s(\varsigma) \bigoplus_{\mathclap{\gamma \in \Gal(K/\QQ)}} \varsigma^\gamma\\
\intertext{and hence}
  \label{eq:rat-char}
  \chi_\rho(g) &= \s(\varsigma) \tr_{K/\QQ} (\chi_\varsigma(g)),
\end{align}
where \(\chi_\varphi\) denotes the character of a representation \(\varphi\). For simplicity
we will not distinguish between \(\rho\) and \(\rho \otimes \CC\).

\begin{definition}
  We say that the representations \(\rho\) and \(\varsigma\) are \emph{Galois associated}.
\end{definition}

Set \(\Irr_{\QQ}(G) \coloneq \{\rho_1, \ldots, \rho_r\}\), the set of irreducible rational
representations of \(G\) up to isomorphism. To each representation \(\rho_i\)
corresponds a unique central idempotent of the group-algebra \(\QQ[G]\) denoted
by \(e_i\) such that \(\QQ[G]e_i\) affords \(\rho_i\) and
\(e_1+\cdots+e_r = 1\). These idempotents induces a unique decomposition of
\(\QQ[G]\) into simple sub-algebras
\begin{equation}
  \label{eq:iso-qg}
  \QQ[G] = \QQ[G]e_1 \oplus \cdots \oplus \QQ[G]e_r
\end{equation}
called \emph{isotypical decomposition} of \(\QQ[G]\) (see
\cite[equation~2.27]{book:lange22}). Moreover, each simple sub-algebra
\(\QQ[G]e_i\) can be (non-uniquely) decomposed into
\(n_i \coloneq \deg(\varsigma_i)/\s(\varsigma_i)\) indecomposable left \(\QQ[G]\)-modules, where
\(\varsigma_i\) is a complex irreducible representation Galois associated to
\(\rho_i\). More precisely, there are primitive orthogonal idempotents
\(q_{i,1}, \ldots, q_{i,n_i}\) such that \(e_i = q_{i,1} + \cdots + q_{i,n_i}\) (see
\cite[equation~2.31]{book:lange22}). This idempotents induce the \emph{group
  algebra decomposition}:
\begin{equation}
  \label{eq:ga-qg}
  \QQ[G] = \bigoplus_{i=1}^r \bigoplus_{j=1}^{n_i} \QQ[G]q_{i,j}.
\end{equation}

We now introduce a particular kind of representations that will be important in
section~\ref{sec:prym-varieties}. For a subgroup \(H \subset G\) we set
\(\rho_H \coloneq \Ind_H^G (1_H)\), where \(1_H\) denotes the trivial representation of
\(H\). Since \(1_H\) is a rational representation, \(\rho_H\) is also
rational.

\begin{remark}
  \label{rem:Frobenius}
  The multiplicity of the irreducible components of \(\rho_H\) may be computed
  from the character table of \(G\) and the cardinality of the \(G\)-conjugacy
  classes of \(H\) through Frobenius reciprocity (see
  \cite{book:serre77}*{Theorem~13}).
\end{remark}

\subsection{Group algebra decomposition of an abelian variety}
\label{sec:ga-jac}

Consider an abelian variety \(A\) with a \(G\)-action. The action of \(G\)
induces an algebra homomorphism
\(\QQ[G] \to \End_{\QQ}(A) \coloneqq \End(A) \otimes_{\ZZ} \QQ\); even if it is not a
monomorphism, we do not distinguish between elements of \(\QQ[G]\) and their
images in \(\End_{\QQ}(A)\). Each element \(\gamma \in \QQ[G]\) defines an abelian
subvariety \(\im(\gamma) \coloneqq \im(a\gamma)\), where \(a\) is any integer such that
\(a\gamma \in \End(X)\). This definition does not depend on the choice of
\(a\). The isotypical decomposition of \(\QQ[G]\) of equation~\eqref{eq:iso-qg}
induces a decomposition of \(A\) as stated in the following result.

\begin{proposition}[\cite{book:lange22}*{Theorem~2.9.1}]
  \label{prop:isotypical}
  \hfill
  \begin{enumerate}
  \item \label{item:hom} The subvarieties \(\im(e_i)\) are \(G\)-stable with
    \(\Hom_G(\im(e_i), \im(e_j)) = 0\) for \(i \neq j\).
  \item The addition map induces a \(G\)-equivariant isogeny
    \[ \im(e_1) \times \cdots \times \im(e_r) \sim A, \]
    where \(G\) acts on \(\im(e_i)\) by the representation \(\rho_i\).
  \end{enumerate}
\end{proposition}

This isogeny is the \emph{isotypical decomposition} of \(A\). Moreover, the
decomposition in equation~\eqref{eq:ga-qg} yields the following result.

\begin{proposition}[\cite{book:lange22}*{Theorem~2.9.2}]
  \label{prop:ga-ab}
  With the previous notation, there are abelian subvarieties \(B_1,\ldots,B_r\) of
  \(A\) and a \(G\)-equivariant isogeny
  \begin{equation}
    \label{eq:ga-ab}
    B_1^{n_1} \times \cdots \times B_r^{n_r} \sim A,
  \end{equation}
  where \(G\) acts on \(B_i^{n_i}\) via the representation \(\rho_i\) (with
  appropriate multiplicity).
\end{proposition}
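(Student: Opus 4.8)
The plan is to refine the isotypical decomposition of Proposition~\ref{prop:isotypical} one block at a time. By that proposition there is a $G$-equivariant isogeny $A \sim \im(e_1) \times \cdots \times \im(e_r)$, with $G$ acting on $\im(e_i)$ through the simple algebra $\QQ[G]e_i$ (indeed $\QQ[G]$ acts on $\im(e_i)$ through this block, since $e_j e_i = 0$ for $j \neq i$). As an isogeny of the factors yields an isogeny of the product, it suffices to produce for each $i$ an abelian subvariety $B_i \subseteq A$ together with a $G$-equivariant isogeny $B_i^{n_i} \sim \im(e_i)$.

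Fix $i$ and write $e_i = q_{i,1} + \cdots + q_{i,n_i}$ as in~\eqref{eq:ga-qg}. First I would check that the addition map $\im(q_{i,1}) \times \cdots \times \im(q_{i,n_i}) \to \im(e_i)$ is an isogeny. Choosing an integer $m$ with $m q_{i,j} \in \End(A)$ for all $j$, the assignment $y \mapsto (m q_{i,j} y)_j$ is a quasi-inverse: since $e_i$ acts as the identity on $\im(e_i)$ and $q_{i,j} q_{i,k} = \delta_{jk} q_{i,j}$, both composites equal multiplication by $m$. Hence $\im(e_i) \sim \prod_{j} \im(q_{i,j})$.

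Next I would show that the factors are pairwise isogenous. The block $\QQ[G]e_i$ is a simple algebra, hence a matrix algebra over a division ring, and the indecomposable left ideals $\QQ[G]q_{i,j}$ are its minimal left ideals, so they are pairwise isomorphic as $\QQ[G]$-modules. Explicitly there are ``matrix unit'' elements $u_{jk} \in q_{i,j}\QQ[G]q_{i,k}$ with $u_{jk}u_{kj} = q_{i,j}$ and $u_{kj}u_{jk} = q_{i,k}$; after clearing denominators, $u_{jk}$ maps $\im(q_{i,k})$ into $\im(q_{i,j})$ (because $q_{i,j}u_{jk} = u_{jk}$), and $u_{jk}$, $u_{kj}$ are mutually inverse up to an integer. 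Setting $B_i \coloneqq \im(q_{i,1})$ we get $\im(q_{i,j}) \sim B_i$ for all $j$, hence $\im(e_i) \sim B_i^{n_i}$; assembling over $i$ gives $B_1^{n_1} \times \cdots \times B_r^{n_r} \sim A$.

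The remaining, and most delicate, point is the $G$-action. The subvarieties $\im(q_{i,j})$ are not individually $G$-stable — conjugation by $g \in G$ carries $q_{i,j}$ to another primitive idempotent of the block — so the isogeny just built is not $G$-equivariant factor by factor. What is available is that $G$ normalizes $\QQ[G]e_i$ (as $e_i$ is central), so the $G$-action on $\im(e_i)$, transported to $B_i^{n_i}$, is a module over the simple algebra $\QQ[G]e_i$; since that algebra has a unique simple module, which affords $\rho_i$, the rational representation of $G$ on $\im(e_i)$ is necessarily a multiple of $\rho_i$ — this is what ``$\rho_i$ with appropriate multiplicity'' means. The obstacle is to make the isogeny $B_i^{n_i} \to \im(e_i)$ respect the $G$-action; the clean way is not to impose a factorwise action on $B_i^{n_i}$ a priori, but to equip $B_i^{n_i}$ with the $G$-action transported from $\im(e_i)$ along the isogeny already constructed, and then verify that this action has the asserted shape.
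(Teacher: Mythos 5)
The paper does not prove this statement; it is quoted verbatim from \cite{book:lange22}*{Theorem~2.9.2}, so there is no internal proof to compare against. Your argument is correct and is essentially the standard proof of that theorem: the quasi-inverse $y \mapsto (mq_{i,j}y)_j$ showing that addition gives an isogeny $\prod_j \im(q_{i,j}) \sim \im(e_i)$, the matrix units $u_{jk}$ identifying the factors with a single $B_i \coloneqq \im(q_{i,1})$, and the observation that $H_1(\im(e_i),\QQ)$ is a module over the simple block $\QQ[G]e_i$ and hence a multiple of $\rho_i$ are exactly the ingredients of the proof in the cited source. Your closing caveat is also the right one: the $\im(q_{i,j})$ are not individually $G$-stable, so the equivariance must be read as transporting the action from $\im(e_i)$, which is how the statement ``$G$ acts via $\rho_i$ with appropriate multiplicity'' is meant.
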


\begin{remark}
  We can set \(B_i \coloneq \im(q_{i,1})\) for \(i=1,\ldots,r\), for example, but the
  components \(B_i\) are not uniquely determined.
\end{remark}

\begin{definition}
  The isogeny of equation~\eqref{eq:ga-ab} is the \emph{group algebra
    decomposition} of \(A\) and each \(B_i\) is a \emph{group algebra
    component} of \(A\).
\end{definition}

\subsection{Galois coverings}
\label{sec:GalCov}

Let \(X\) be a compact Riemann surface with a \(G\)-action; that is, there is a
monomorphism from \(G\) to the automorphism group \(\Aut(X)\) of \(X\). This
action defines a \emph{Galois covering} \(\pi_G \colon X \to Y\), where
\(Y\coloneq X/G\) (see \cite{book:lange22}*{section~3.1}). For a pair of subgroups
\(H\subset N\) of \(G\), the induced maps \(\pi^H_N\colon X/H \to X/N\) are called
\emph{intermediate coverings} of \(\pi_G\). The genus and ramification data of
the intermediate coverings are determined by the \emph{geometric signature}
\begin{equation}
  \label{eq:geometric_signature}
  (g; G_1, \cdots, G_s)
\end{equation}
of \(\pi_G\); that is, the genus \(g\) of \(Y\) and the stabilizer subgroups
\(G_j\) of \(G\) with respect to branch points in each different orbit of
branch points of \(\pi_G\); the subgroups \(G_j\) are determined up to conjugacy
and reenumeration. The \emph{signature} of the \(G\)-action on \(X\) is
\((g; n_1,\ldots,n_s)\), where \(n_i \coloneq \card{G_i}\) for \(i=1,\ldots,s\).

With respect to the existence of a Galois covering with a given geometric
signature, we have the following result, which is a direct consequence of
\cite{book:lange22}*{Theorem~3.1.4}

\begin{proposition}
  \label{prop:RET}
  A Galois covering with geometric signature as in
  equation~\eqref{eq:geometric_signature} exists if and only if there is a
  \((2g+s)\)-tuple \((a_1,b_1,\ldots,a_g,b_g,c_1,\ldots,c_s)\) of elements of
  \(G\) such that the following conditions are satisfied:
  \begin{enumerate}
  \item \label{item:RET-gen} The elements
    \(a_1,b_1,\ldots,a_g,b_g,c_1,\ldots,c_s\) generate \(G\).
  \item \label{item:RET-conj} The group \(\gen{c_i}\) is conjugate to \(G_i\)
    for \(i=1,\ldots,s\).
  \item \label{item:RET-prod} We have
    \(\smallprod_{i=1}^g [a_i,b_i] \smallprod_{j=1}^s c_j = 1\), where
    \([a_i,b_i] = a_ib_ia_i^{-1} b_i^{-1}\).
  \end{enumerate}
\end{proposition}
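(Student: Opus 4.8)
The plan is to deduce the statement directly from the Riemann existence theorem in the form recorded as \cite{book:lange22}*{Theorem~3.1.4}. Recall that this result characterizes connected Galois coverings $\pi_G\colon X \to Y$ with group $G$, with $Y$ of genus $g$ and branch locus $\{y_1,\ldots,y_s\}$, in terms of surjections onto $G$ of the fundamental group of the punctured surface $Y \setminus \{y_1,\ldots,y_s\}$. That group admits the standard presentation
\[
  \bigl\langle a_1, b_1, \ldots, a_g, b_g, c_1, \ldots, c_s \bigm| \textstyle\prod_{i=1}^{g}[a_i,b_i]\prod_{j=1}^{s}c_j = 1 \bigr\rangle,
\]
with each $c_j$ represented by a small simple loop around $y_j$, and a covering with the prescribed branching behavior exists precisely when there is a homomorphism $\theta$ from this group onto $G$ for which $\theta(c_j)$ generates a subgroup conjugate to $G_j$ for every $j$.

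First I would observe that prescribing such a homomorphism $\theta$ amounts to prescribing the tuple of its values on the generators, written (abusing notation) as $(a_1,b_1,\ldots,a_g,b_g,c_1,\ldots,c_s)$ with $a_i,b_i,c_j \in G$. The single defining relation of the presentation translates verbatim into condition~\ref{item:RET-prod}; conversely, any tuple satisfying condition~\ref{item:RET-prod} defines a well-defined homomorphism from the presented group to $G$. Surjectivity of $\theta$ is exactly condition~\ref{item:RET-gen}, and the branching requirement that $\gen{\theta(c_j)}$ be conjugate to $G_j$ is condition~\ref{item:RET-conj}. Thus a tuple as in the statement exists if and only if a valid input of Theorem~3.1.4 exists, and the covering it produces has genus $g$ over $Y$ by construction and stabilizer $\gen{c_j}$, hence conjugate to $G_j$, over $y_j$; that is, it has geometric signature \eqref{eq:geometric_signature}.

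The only point that takes a little care is the bookkeeping around the phrases ``up to conjugacy and reenumeration'' in the definition of the geometric signature: the $G_j$ are only specified up to conjugacy, the assignment of branch points to the loops $c_j$ is not canonical, and post-composing $\theta$ with an inner automorphism of $G$ or permuting the punctures changes the tuple, but only within the latitude already granted by condition~\ref{item:RET-conj}. Checking this is routine. I do not expect any genuine obstacle: the proposition is, as noted, a transcription of the cited theorem into the language of generating tuples, and one need not even append a hyperbolicity or Riemann--Hurwitz inequality, since any such inequality is automatically subsumed by the existence of the surjection $\theta$.
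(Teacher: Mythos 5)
Your argument is correct and is exactly the route the paper takes: the paper simply states the proposition as a direct consequence of \cite{book:lange22}*{Theorem~3.1.4}, and your write-up just makes explicit the standard dictionary between surjections of the punctured-surface fundamental group and generating tuples. Nothing further is needed.
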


\begin{definition}
  A tuple satisfying items~\ref{item:RET-gen} to~\ref{item:RET-prod} of
  Proposition~\ref{prop:RET} is called a \emph{generating vector of type}
  \((g; n_1,\ldots,n_s)\) of \(G\).
\end{definition}

\subsection{Group algebra components as Prym varieties}
\label{sec:prym-varieties}

For a compact Riemann surface \(X\), we denote its Jacobian variety by
\(\J(X)\); it is a principally polarized abelian variety. Given a (ramified)
covering map \(f\colon X \to Y\) between compact Riemann surfaces, the pullback
\(f^*\colon \J(Y) \to \J(X)\) is an \emph{isogeny} onto its image \(f^*\J(Y)\).

\begin{definition}
  The \emph{Prym variety} of \(f\) is the unique complementary abelian
  subvariety of \(f^*\J(Y)\) in \(\J(X)\) (with respect to its polarization),
  it is denoted by \(\pr(f)\).
\end{definition}

Now consider a Galois covering \(\pi_G\colon X\to Y\) as in the previous subsection
and consider two subgroups \(H\) and \(N\) of \(G\) such that \(H\subset N\). The
following result gives a decomposition of \(\J(X/H)\) and \(\pr(\pi^H_N)\) into a
product of group algebra components of \(\J(X)\) up to isogeny, we call it the
\emph{group algebra decomposition} of \(\J(X/H)\) and \(\pr(\pi^H_N)\),
respectively.

\begin{proposition}[\cite{book:lange22}*{Corollary~3.5.8~and~Corollary~3.5.9}]
  \label{prop:prym-decomp}
  With the previous notation, we have:
  \begin{enumerate}
  \item Set \(u_i \coloneq \inpr{\rho_H}{\varsigma_i} / \s(\varsigma_i)\) for \(i=1,\ldots,r\), then
    \[ \J(X/H) \sim \J(X/G) \times B_2^{u_2}\times \cdots \times B_r^{u_r}. \]
  \item \label{item:pr-decomp} Set
    \(t_i \coloneq \paren[\big]{\inpr{\rho_H}{\varsigma_i} - \inpr{\rho_N}{\varsigma_i}} / \s(\varsigma_i)\) for
    \(i = 2, \ldots, r\), then
    \begin{equation*}
      \pr(\pi^H_N) \sim B_2^{t_2} \times \cdots \times B_r^{t_r}.
    \end{equation*}
  \end{enumerate}
\end{proposition}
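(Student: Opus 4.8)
\section*{Proof proposal}

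The plan is to realise both \(\J(X/H)\) and \(\pr(\pi^H_N)\) as images of explicit idempotents of \(\QQ[G]\) acting on \(\J(X)\), and then to read off their group algebra decompositions from the module structure of the corresponding left ideals. For a subgroup \(K\subseteq G\) write \(\varepsilon_K\coloneqq\frac{1}{\card{K}}\sum_{k\in K}k\in\QQ[G]\); this is an idempotent, it is symmetric (fixed by the Rosati involution, since \(g\mapsto g^{-1}\) permutes \(K\)), and as a left \(\QQ[G]\)-module \(\QQ[G]\varepsilon_K\) is the permutation module on \(G/K\), hence \(\QQ[G]\varepsilon_K\cong\Ind_K^G(1_K)=\rho_K\). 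The first input I would take for granted is the standard description of the pullback of a Jacobian under a quotient by a group of automorphisms: the map \(p_K\colon X\to X/K\) induces an isogeny \(\J(X/K)\sim\im(\varepsilon_K)\subseteq\J(X)\), compatibly (up to a multiple) with the principal polarisations. The second input is Propositions~\ref{prop:isotypical} and~\ref{prop:ga-ab} and the bookkeeping behind them: \(\QQ[G]e_i\cong\rho_i^{\,n_i}\) as \(\QQ[G]\)-modules, each minimal left ideal \(\QQ[G]q_{i,j}\) affords \(\rho_i\), and \(B_i=\im(q_{i,j})\) up to isogeny independently of \(j\).

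For part~(1) I would argue as follows. Because the \(e_i\) are central orthogonal idempotents with \(\sum_i e_i=1\), we have \(\varepsilon_H=\sum_i\varepsilon_He_i\) with the \(\varepsilon_He_i\) orthogonal idempotents, \(\varepsilon_He_i\in\QQ[G]e_i\), and therefore \(\im(\varepsilon_H)\sim\prod_i\im(\varepsilon_He_i)\) with \(\im(\varepsilon_He_i)\subseteq\im(e_i)\). Decompose \(\rho_H=\bigoplus_i m_i\,\rho_i\) into rational irreducibles; then \(\QQ[G]\varepsilon_He_i=(\QQ[G]\varepsilon_H)e_i\cong m_i\,\QQ[G]q_{i,1}\), so the idempotent \(\varepsilon_He_i\) has the same rank inside \(\QQ[G]e_i\cong(\QQ[G]q_{i,1})^{n_i}\) as a sum of \(m_i\) conjugate primitive idempotents, whence \(\im(\varepsilon_He_i)\sim B_i^{m_i}\). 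Finally, passing from the rational to the complex inner product via~\eqref{eq:rat-decomp} and Frobenius reciprocity (Remark~\ref{rem:Frobenius}) gives \(m_i=\inpr{\rho_H}{\varsigma_i}/\s(\varsigma_i)=u_i\); for the trivial representation \(m_1=\inpr{\rho_H}{1_G}=1\), and since \(\varepsilon_G=e_1=q_{1,1}\) we get \(B_1=\im(\varepsilon_G)\sim\J(X/G)\). This is exactly the stated decomposition \(\J(X/H)\sim\J(X/G)\times B_2^{u_2}\times\cdots\times B_r^{u_r}\).

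For part~(2), the key algebraic observation is that \(H\subseteq N\) forces \(\varepsilon_H\varepsilon_N=\varepsilon_N\varepsilon_H=\varepsilon_N\), so \(\varepsilon_N\) and \(\varepsilon_H-\varepsilon_N\) are orthogonal symmetric idempotents with sum \(\varepsilon_H\). Thus \(\im(\varepsilon_N)\subseteq\im(\varepsilon_H)\), and under the identifications of part~(1) the inclusion is the pullback \((\pi^H_N)^*\colon\J(X/N)\to\J(X/H)\); since both idempotents are symmetric, the standard theory of complementary abelian subvarieties identifies \(\im(\varepsilon_H-\varepsilon_N)\) with the complement of \(\im(\varepsilon_N)\) in \(\im(\varepsilon_H)\) with respect to the polarisation, that is, \(\pr(\pi^H_N)\sim\im(\varepsilon_H-\varepsilon_N)\). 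Now \(\QQ[G](\varepsilon_H-\varepsilon_N)\) is the complement of \(\QQ[G]\varepsilon_N=\rho_N\) in \(\QQ[G]\varepsilon_H=\rho_H\), so rerunning the computation of part~(1) with the multiplicities \(u_i-(\text{mult.\ of }\rho_i\text{ in }\rho_N)=t_i\) in place of \(m_i\) yields \(\pr(\pi^H_N)\sim\prod_{i\ge2}B_i^{t_i}\); the trivial component disappears because \(\inpr{\rho_H}{1_G}=\inpr{\rho_N}{1_G}=1\).

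I expect the main obstacle to be the rigorous passage from a direct-sum decomposition of the left ideal \(\QQ[G]\pi\) of an idempotent \(\pi\) into minimal left ideals \(\QQ[G]q_{i,j}\) to the corresponding product decomposition of the abelian subvariety \(\im(\pi)\) up to isogeny: concretely, that \(\im(-)\) is additive on orthogonal symmetric idempotents (\(\im(\pi_1+\pi_2)\sim\im(\pi_1)\times\im(\pi_2)\) when \(\pi_1\pi_2=\pi_2\pi_1=0\)), that conjugate primitive idempotents of a block have isogenous images, and that the polarisation-complement is computed by the difference of symmetric idempotents. This is precisely the machinery underlying Propositions~\ref{prop:isotypical} and~\ref{prop:ga-ab}; in the write-up I would either invoke it directly (these statements are \cite{book:lange22}*{Corollaries~3.5.8~and~3.5.9}) or isolate and prove the single lemma about \(\im(-)\) that is needed, the rest being the representation-theoretic identities \(m_i=u_i\) and \(m_i^H-m_i^N=t_i\) already sketched above.
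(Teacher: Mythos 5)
The paper does not prove this proposition at all: it is imported verbatim from \cite{book:lange22}*{Corollaries~3.5.8~and~3.5.9}, so there is no internal argument to compare against. Your reconstruction --- realising $\J(X/H)$ as $\im(\varepsilon_H)$ for the symmetric idempotent $\varepsilon_H=\frac{1}{\card{H}}\sum_{h\in H}h$, using $\varepsilon_H\varepsilon_N=\varepsilon_N$ for $H\subseteq N$ to get $\pr(\pi^H_N)\sim\im(\varepsilon_H-\varepsilon_N)$, and converting module multiplicities into exponents via $\inpr{\rho_H}{\varsigma_i}/\s(\varsigma_i)$ --- is precisely the standard proof given in that reference, and it is correct; the technical inputs you flag (additivity of $\im$ on orthogonal symmetric idempotents, isogeny of images of conjugate primitive idempotents, and the polarisation complement being cut out by the difference of symmetric idempotents) are exactly the lemmas established there.
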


Since \(B_1 \sim \J(X/G)\) , we are concerned into describing the group algebra
components \(B_i\) for \(i=2,\ldots,r\) as Prym varieties. The following corollary
gives a characterization of components that are isogenous to Prym varieties of
intermediate coverings (see \cite{book:lange22}*{Corollary~3.5.10}).

\begin{corollary}
  \label{cor:rep_isolation}
  If \(\rho_H \cong \rho_N \oplus W_j\), then \(\pr(\pi^H_N) \sim B_i\).
\end{corollary}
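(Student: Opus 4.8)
The plan is to read the statement straight off the exponent formula in Proposition~\ref{prop:prym-decomp}\ref{item:pr-decomp} after a short character computation. Write the hypothesis as \(\rho_H \cong \rho_N \oplus \rho_i\) for some index \(i\) (so \(W_j\) in the statement is the representation space of the rational irreducible \(\rho_i\)). First I would check that this hypothesis is well-posed: by transitivity of induction \(\rho_H = \Ind_N^G(\Ind_H^N 1_H)\), and \(\inpr{\Ind_H^N 1_H}{1_N} = \inpr{1_H}{1_H} = 1\) by Frobenius reciprocity, so \(\chi_{\rho_H} - \chi_{\rho_N}\) is always the character of a genuine representation; the content of the hypothesis is merely that this representation is \emph{irreducible}. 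Moreover \(\inpr{\rho_H}{1_G} = \inpr{1_H}{1_H} = 1 = \inpr{\rho_N}{1_G}\), so the extra summand is not the trivial representation, i.e.\ \(i \in \{2,\ldots,r\}\); the goal is then to show \(\pr(\pi^H_N) \sim B_i\).

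Next I would evaluate each exponent \(t_k\), for \(k = 2,\ldots,r\), of Proposition~\ref{prop:prym-decomp}\ref{item:pr-decomp}. By additivity of \(\inpr{\cdot}{\cdot}\) in its first argument and the hypothesis, \(t_k = \inpr{\rho_i}{\varsigma_k}/\s(\varsigma_k)\). Substituting equation~\eqref{eq:rat-decomp}, which gives \(\rho_i \cong \s(\varsigma_i)\bigoplus_{\gamma\in\Gal(K/\QQ)}\varsigma_i^\gamma\) over \(\CC\), we obtain
\[
  \inpr{\rho_i}{\varsigma_k} = \s(\varsigma_i)\,\card{\{\gamma\in\Gal(K/\QQ) : \varsigma_i^\gamma \cong \varsigma_k\}}.
\]
If \(k \neq i\) this set is empty, since every \(\varsigma_i^\gamma\) is Galois associated to \(\rho_i\) while \(\varsigma_k\) is Galois associated to \(\rho_k \not\cong \rho_i\); hence \(t_k = 0\). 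If \(k = i\), then \(\varsigma_i^\gamma \cong \varsigma_i\) forces \(\chi_{\varsigma_i^\gamma} = \chi_{\varsigma_i}\), i.e.\ \(\gamma = \id\) in \(\Gal(K/\QQ)\), because \(K = \QQ(\chi_{\varsigma_i})\) is by definition the character field; so the set is \(\{\id\}\), \(\inpr{\rho_i}{\varsigma_i} = \s(\varsigma_i)\), and \(t_i = 1\). Plugging \(t_i = 1\) and \(t_k = 0\) for \(k \neq i\) into Proposition~\ref{prop:prym-decomp}\ref{item:pr-decomp} yields \(\pr(\pi^H_N) \sim B_2^{t_2}\times\cdots\times B_r^{t_r} = B_i\), as claimed.

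I do not anticipate any genuine obstacle: the whole argument is bookkeeping with induced characters and Schur indices, and the one step worth a second glance is the identity \(\inpr{\rho_i}{\varsigma_i} = \s(\varsigma_i)\) — equivalently, that \(\Gal(K/\QQ)\) acts freely on the Galois orbit of \(\varsigma_i\) — which is immediate from the definition of \(K\) as the character field. (Alternatively, one can simply cite \cite{book:lange22}*{Corollary~3.5.10}, of which this corollary is a restatement.)
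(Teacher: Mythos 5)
Your proof is correct and is exactly the derivation the paper leaves implicit: the paper states this as a corollary of Proposition~\ref{prop:prym-decomp}\ref{item:pr-decomp} and simply cites \cite{book:lange22}*{Corollary~3.5.10}, and your computation of the exponents \(t_k\) (using \(\inpr{\rho_i}{\varsigma_k} = \s(\varsigma_i)\delta_{ik}\), which follows from equation~\eqref{eq:rat-decomp} and the fact that \(\Gal(K/\QQ)\) acts freely on the Galois orbit of \(\varsigma_i\)) is the standard argument behind that citation. The side remarks — that \(\chi_{\rho_H}-\chi_{\rho_N}\) is always a true character and that the extra summand cannot be trivial — are also correct.
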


For some values of \(i\) we may have \(\dim B_i = 0\), and hence
\(B_i \sim \pr(\pi_N^H)\) for any intermediate covering with trivial Prym variety
(in particular for \(H=N\)). In contrast, item~\ref{item:hom} of
Proposition~\ref{prop:isotypical} implies that there is no equivariant isogeny
between non-trivial components \(B_i\) and \(B_j\) if \(i \neq j\). Thereby, if in
the group algebra decomposition of \(\pr(\pi^H_N)\) there are at least two
different non-trivial group algebra components \(B_i\) and \(B_j\) with
\(t_i,t_j \geq 1\) or one non-trivial \(B_i\) with \(t_i\geq 2\), then
\(\pr(\pi^H_N)\) is not isogenous to any \(B_i\). This shows the importance of
knowing the dimension of each \(B_i\); it depends only on the geometric
signature of \(\pi_G\), in equation~\eqref{eq:geometric_signature}, and may be
computed as follows.

\begin{proposition}[\cite{book:lange22}*{Corollary~3.5.17}]
  \label{prop:dim-B_i}
  For \(i=2,\ldots,r\), the dimension of the group algebra component \(B_i\) is
  \[
    \dim B_i = \extdeg{L_i}{\QQ}\paren[\Big]{\deg(\varsigma_i) (g-1) + \frac{1}{2}
      \sum_{j=1}^s \paren[\big]{\deg(\varsigma_i) - \inpr{\rho_{G_j}}{\varsigma_i}}},
  \]
  where \(L_i\) denotes the field of definition of \(\varsigma_i\).
\end{proposition}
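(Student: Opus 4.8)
The plan is to reduce the formula to the computation of a single multiplicity inside the $G$-module $H_1(X,\QQ)\cong H_1(\J(X),\QQ)$, and then to read off that multiplicity from the geometric signature of $\pi_G$ by a Chevalley--Weil-type argument.

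\emph{Reduction to a multiplicity.}
Combining Proposition~\ref{prop:ga-ab} with Proposition~\ref{prop:isotypical}, the isotypical piece $\im(e_i)$ is isogenous to $B_i^{n_i}$ with $n_i=\deg(\varsigma_i)/\s(\varsigma_i)$, so $\dim B_i=\dim\im(e_i)/n_i$. Since $\im(e_i)$ is an abelian variety whose rational homology is the image of $e_i$ acting on $H_1(X,\QQ)$ --- that is, the $\rho_i$-isotypical summand $e_i\,H_1(X,\QQ)$ of the $\QQ[G]$-module $H_1(X,\QQ)$ --- we get $\dim\im(e_i)=\tfrac12\dim_\QQ\paren{e_i\,H_1(X,\QQ)}$. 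Let $\mu_i\coloneq\inpr{\chi_{H_1(X,\CC)}}{\chi_{\varsigma_i}}$ be the multiplicity of $\varsigma_i$ in $H_1(X,\CC)$. Because $H_1(X,\QQ)$ is genuinely rational, equation~\eqref{eq:rat-decomp} shows that each Galois conjugate $\varsigma_i^\gamma$, $\gamma\in\Gal(K_i/\QQ)$, occurs in $H_1(X,\CC)$ with the same multiplicity $\mu_i$, whence $\dim_\QQ\paren{e_i\,H_1(X,\QQ)}=\extdeg{K_i}{\QQ}\,\mu_i\deg(\varsigma_i)$. Assembling these identities and using $\extdeg{L_i}{\QQ}=\s(\varsigma_i)\extdeg{K_i}{\QQ}$ yields
\[
  \dim B_i=\tfrac12\extdeg{L_i}{\QQ}\,\mu_i .
\]
It is essential here to argue with $H_1(X,\QQ)$ rather than with the analytic representation on $H^0(X,\Omega^1_X)$, whose multiplicities need not be $\Gal(K_i/\QQ)$-invariant when $\varsigma_i$ is of complex type.

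\emph{The rational $G$-module $H_1(X,\QQ)$.}
It remains to show that, for $i\ge 2$,
\[
  \mu_i=2\deg(\varsigma_i)(g-1)+\sum_{j=1}^{s}\paren[\big]{\deg(\varsigma_i)-\inpr{\rho_{G_j}}{\varsigma_i}} .
\]
I would establish the isomorphism of $\QQ[G]$-modules
\[
  H_1(X,\QQ)\;\cong\;\QQ[G]^{\,2g-2}\ \oplus\ \QQ^{2}\ \oplus\ \bigoplus_{j=1}^{s}\paren[\big]{\QQ[G]\ominus\QQ[G/G_j]}
\]
(an honest isomorphism when $g\ge1$, and the corresponding identity in the rational representation ring of $G$ when $g=0$). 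To obtain it, delete from $X$ the ramification points of $\pi_G$: one gets a \emph{free} $G$-covering $U\to Y'$ of the $s$-punctured surface $Y'=Y\setminus\{\text{branch points}\}$, whose fundamental group is free of rank $2g+s-1$; the equivariant CW chain complex of $U$ has one vertex orbit and $2g+s-1$ edge orbits, so semisimplicity of $\QQ[G]$ gives $H_1(U,\QQ)\cong\QQ[G]^{\,2g+s-2}\oplus\QQ$ (when $\pi_G$ is étale, i.e. $s=0$, one argues directly with the equivariant CW structure of the closed surface $X$). Filling the punctures back in kills the $G$-submodule of $H_1(U,\QQ)$ generated by the small loops about the ramification points; that submodule is the image of $\bigoplus_{j=1}^{s}\QQ[G/G_j]$ --- here one uses that the stabilizer $G_j$ merely rotates, hence fixes the class of, the loop about the point it fixes --- modulo the single trivial relation expressing that the sum of all these loops vanishes in $H_1(U,\QQ)$. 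Subtracting yields the displayed module. Pairing it with $\chi_{\varsigma_i}$, using that $\rho_{G_j}$ is the permutation module $\QQ[G/G_j]$ (Remark~\ref{rem:Frobenius}) and that $\inpr{\QQ^{2}}{\varsigma_i}=0$ for $i\ge2$, produces the asserted value of $\mu_i$; together with the reduction step this is exactly the claimed formula for $\dim B_i$.

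\emph{Main obstacle.}
The delicate part is identifying the rational $G$-module $H_1(X,\QQ)$ precisely: keeping track of the loops around the ramification points together with the unique global relation among them, and arranging the bookkeeping so that the representation-ring identity is applied correctly when $g\in\{0,1\}$ (the summand $\QQ[G]^{\,2g-2}$ being only virtual for $g=0$). Note that the hypothesis $i\ge2$ enters precisely through the $\QQ^{2}$ summand, which is also why the trivial component, where $B_1\sim\J(Y)$, must be excluded from the formula.
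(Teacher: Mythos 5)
This proposition is imported verbatim from \cite{book:lange22}*{Corollary~3.5.17}; the paper gives no proof of its own, so there is nothing internal to compare against. Your blind proof is correct, and it is essentially the standard derivation underlying the cited result. The reduction \(\dim B_i=\tfrac12\extdeg{L_i}{\QQ}\mu_i\) is sound: \(\dim\im(e_i)=\tfrac12\dim_\QQ e_iH_1(X,\QQ)\), the rationality of \(\chi_{H_1(X,\QQ)}\) forces equal multiplicities for all Galois conjugates of \(\varsigma_i\), and \(\extdeg{L_i}{\QQ}=\s(\varsigma_i)\extdeg{K_i}{\QQ}\) assembles the constants correctly. The identification of \(H_1(X,\QQ)\) as \(\QQ[G]^{2g-2}\oplus\QQ^2\oplus\bigoplus_j(\QQ[G]\ominus\QQ[G/G_j])\) in the rational representation ring is the Chevalley--Weil-type computation one expects, and your bookkeeping checks out: the boundary loops span the image of \(\bigoplus_j\QQ[G/G_j]\) with exactly a one-dimensional (trivial) kernel, as a Riemann--Hurwitz dimension count confirms, and the hypothesis \(i\ge 2\) enters exactly where you say, through the two trivial summands. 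Your remark that one must work with \(H_1(X,\QQ)\) rather than the analytic representation \(H^0(X,\Omega^1_X)\) is also the right caution, since the holomorphic multiplicities of Galois-conjugate representations need not agree. The only points requiring care --- the virtual nature of \(\QQ[G]^{2g-2}\) for \(g=0\) and the separate treatment of \(s=0\) --- are both flagged and handled.
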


\begin{definition}
  \label{def:affordable}
  If each group algebra component of the Jacobian of a Galois covering is
  isogenous to the Prym variety of an intermediate covering, we say that its
  group algebra decomposition is \emph{affordable by Prym varieties}.
\end{definition}

\section{One dimensional affine group over a finite field}
\label{sec:affine-group}

For the rest of this article, set \(G \coloneq \GA{q}\), the group of affine
transformations of the Galois field \(\GF{q}\) of characteristic \(p\) with
\(q = p^n\). Each \(g\in G\) is a map of the form \(g(x) = ax+b\) with
\(b\in \GF{q}\) and \(a\in\GF{q}^\times\). Let \(\alpha \in \GF{q}^\times\) be a primitive
\((q-1)\)-root of unity, then \(\GF{q}^\times = \gen{\alpha}\) as a multiplicative group
(see \cite{book:lidl97}*{section~2.5}). Let \(\tau_b,\lambda_k \in G\) be the elements
defined by \(\tau_b(x) = x + b\) and \(\lambda_k(x) = \alpha^kx\) for
\(b\in \GF{q}\) and \(k=1,\ldots,q-1\); in particular, set
\(\tau \coloneq \tau_1\) and \(\lambda \coloneq \lambda_1\). Also set
\(\tra \coloneq \gen{\tau_b : b\in \GF{q}} \cong \GF{q}\) and
\(\lin \coloneq \gen{\lambda} \cong \GF{q}^\times\), the subgroups of translations and linear maps of
\(G\), respectively. Note that
\(\lambda_k\tau_b\lambda_k^{-1} = \tau_{\lambda_k(b)}\) and \(G = \tra \lin\), hence
\(\tra\) is a normal subgroup of \(G\) and \(G = \tra \rtimes \lin\).

\begin{remark}
  We will write \(\tau_b\lambda_k\) for an arbitrary element of \(G\). Note that
  \(\tau_0 = \lambda_{q-1}\), and it is the identity of \(G\), we will denote it by
  \(\id\).
\end{remark}

\subsection{Complex irreducible representations}
\label{sec:irrC}

Since \(\tra\) is abelian and \(G = \tra \rtimes \lin\), we will use the Wigner--Mackey method
of \emph{little groups} (see \cite{book:serre77}*{section~8.2}) in order to
determine the complex irreducible representations of \(G\), which, according to
the following proposition, should be \(q\) up to isomorphism.

\begin{proposition}
  \label{prop:conj-class}
  The group \(G\) has \(q\) different conjugacy classes, namely \([\tau]\) and
  \([\lambda_k]\) for \(k=1,\ldots, q-1\).
\end{proposition}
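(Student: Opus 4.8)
The plan is to compute the conjugacy classes of $G = \tra \rtimes \lin$ directly, exploiting the semidirect product structure and the explicit multiplication rule $\lambda_k \tau_b \lambda_k^{-1} = \tau_{\lambda_k(b)}$. First I would observe that since $\tra \cong \GF{q}$ is normal and every nonidentity translation $\tau_b$ with $b \in \GF{q}^\times$ is of the form $\lambda_k(\tau_1)\lambda_k^{-1} = \tau_{\alpha^k}$ for a suitable $k$ (because $\lin$ acts transitively on $\GF{q}^\times$ by multiplication), all nontrivial translations are conjugate in $G$; thus $[\tau]$ is a single class of size $q-1$, and $\{\id\}$ is its own class.

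Next I would handle the elements $\tau_b \lambda_k$ with $k \neq q-1$ (i.e.\ $\alpha^k \neq 1$). The key computation is to conjugate $\tau_b \lambda_k$ by an arbitrary element $\tau_c \lambda_m$ and simplify using $\lambda_m \tau_b = \tau_{\alpha^m b}\lambda_m$; one finds $(\tau_c\lambda_m)(\tau_b\lambda_k)(\tau_c\lambda_m)^{-1} = \tau_{c + \alpha^m b - \alpha^k c}\,\lambda_k$, where I have used that $\lin$ is abelian so the $\lambda$-part is unchanged. Since $\alpha^k \neq 1$, the map $c \mapsto c + \alpha^m b - \alpha^k c = (1-\alpha^k)c + \alpha^m b$ is an affine surjection of $\GF{q}$ onto itself as $c$ ranges over $\GF{q}$ (for any fixed $m$), so the conjugacy class of $\tau_b\lambda_k$ is exactly $\{\tau_{b'}\lambda_k : b' \in \GF{q}\}$, a set of size $q$ depending only on $k$ and equal to the class $[\lambda_k]$. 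In particular conjugating does not change the value of $k$, so these $q-2$ classes $[\lambda_1],\ldots,[\lambda_{q-2}]$ are pairwise distinct.

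Finally, tallying up: one class $\{\id\} = [\lambda_{q-1}]$, one class $[\tau]$ of nontrivial translations, and $q-2$ classes $[\lambda_k]$ for $k = 1, \ldots, q-2$, giving $1 + 1 + (q-2) = q$ classes in total, which partition $G$ since the translation classes cover $\tra$ (size $q$, counted as $1 + (q-1)$) and each $[\lambda_k]$ for $k \neq q-1$ covers the coset $\tra\lambda_k$ (size $q$), with $q-1$ cosets beyond $\tra$, giving $q + (q-1)q = q^2 = \card{G}$. I do not anticipate a genuine obstacle here; the only point requiring a little care is the bookkeeping of which coset each class lies in and confirming that the affine map $(1-\alpha^k)c + \alpha^m b$ is surjective precisely because $1 - \alpha^k \in \GF{q}^\times$ when $\alpha^k \neq 1$, which is automatic in a field.
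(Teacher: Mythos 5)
Your argument is correct and is essentially the paper's: both proofs determine the classes by direct conjugation in the semidirect product \(\tra\rtimes\lin\), using that \(1-\alpha^k\) is invertible whenever \(\alpha^k\neq 1\) (the paper conjugates the fixed representatives \(\tau\) and \(\lambda_j\) by a general element, while you conjugate a general element \(\tau_b\lambda_k\); this comes to the same thing). The only blemish is an arithmetic slip in your closing sanity check: \(\card{G}=q(q-1)\), not \(q^2\), and there are \(q-2\) cosets of \(\tra\) in \(G\) other than \(\tra\) itself, so the tally is \(q+(q-2)q=q(q-1)\); this check is redundant in any case, since you have already identified the conjugacy class of every element of \(G\) directly.
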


\begin{proof}
  For \(\tau_b\lambda_k \in G\) we have
  \begin{align*}
    (\tau_b\lambda_k) \tau (\tau_b\lambda_k)^{-1} &= \tau_b (\lambda_k \tau \lambda_k^{-1}) \tau_{-b} =
                               \tau_{\alpha^k},\\
    \intertext{hence \(\tau_a \in [\tau]\) for all \(a\in\GF{q}^\times\). Also}
    (\tau_b\lambda_k) \lambda_j (\tau_b\lambda_k)^{-1} &= \tau_{b-\lambda_j(b)} \lambda_k = \tau_{(1 - \alpha^j)b} \lambda_j;
  \end{align*}
  so, if \(j \neq q-1\), for any \(c \in \GF{q}\) we can set
  \(b \coloneq c/(1-\alpha^j)\) and hence
  \((\tau_b\lambda_k) \lambda_j (\tau_b\lambda_k)^{-1} = \tau_c \lambda_j\). Therefore
  \([\lambda_j] = \{\tau_c \lambda_j : c \in \GF{q}\}\) for
  \(j = 1, \ldots, q-2\). Finally, for \(j = q-1\), the class
  \([\lambda_{q-1}]\) is the conjugacy class of the identity.
\end{proof}

Let \(\zeta_k \in \CC^\times\) denote the primitive \(k\)-root of unity
\(\mathrm{e}^{2\ppi\mathrm{i}/k}\) for each positive integer \(k\). Since
\(\tra\) is naturally isomorphic to the additive group \(\GF{q}\), according to
\cite{book:lidl97}*{Theorem~5.7}, the irreducible characters of \(\tra\) are
\begin{alignat}{2}
  \label{eq:add_char}
  \chi_b(\tau_c) &= \zeta_{p}^{\tr(bc)} &\quad & \text{for \(b\in \GF{q}\);} \\
  \intertext{similarly, since \(\lin\) is naturally isomorphic to
  \(\GF{q}^\times\), according to \cite{book:lidl97}*{Theorem~5.8}, the
  irreducible characters of \(\lin\) are 
  \label{eq:mult_char}}
  \psi_j(\lambda_k) &= \zeta_{q-1}^{jk} && \text{for \(j = 1, \ldots, q-1\).}
\end{alignat}

\begin{theorem}
  \label{thm:comp-rep-aff}
  There are \(q-1\) complex irreducible representations of \(G\) of degree
  \(1\), they are extensions of the \(\psi_j\) in equation~\eqref{eq:mult_char},
  denoted also by \(\psi_j\), given by
  \begin{align}
    \label{eq:psi_rep}
    \psi_j(\tau_b\lambda_k) &\coloneq \zeta_{q-1}^{jk} \quad \text{for \(j = 1, \ldots, q-1\),} \\
    \intertext{and one of degree \(q-1\) defined by}
    \label{eq:standard_rep}
    \st &\coloneq \Ind_{\tra}^G(\chi_1),
  \end{align}
  where \(\chi_1\) is defined by equation~\eqref{eq:add_char} and \(\tra\) is the
  subgroup of translations of \(G\).

  These are all the complex irreducible representations of \(G\) up to
  isomorphism, and all of them have Schur index equal to \(1\).
\end{theorem}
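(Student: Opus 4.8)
The plan is to apply the Wigner--Mackey method of little groups to the semidirect product $G = \tra \rtimes \lin$, exploiting that $\tra$ is abelian and normal in $G$. The first step is to pin down the action of $\lin$ on the character group $\widehat{\tra}$. From $\lambda_k \tau_b \lambda_k^{-1} = \tau_{\alpha^k b}$ together with equation~\eqref{eq:add_char} one computes $(\lambda_k \cdot \chi_c)(\tau_b) = \chi_c(\lambda_k^{-1}\tau_b\lambda_k) = \chi_c(\tau_{\alpha^{-k}b}) = \chi_{\alpha^{-k}c}(\tau_b)$, so $\lambda_k \cdot \chi_c = \chi_{\alpha^{-k}c}$. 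Identifying $\widehat{\tra}$ with $\GF{q}$ via $c \mapsto \chi_c$, the $\lin$-action has exactly two orbits: the fixed point $\{\chi_0\}$ (the trivial character $1_\tra$) and $\GF{q}^\times$, on which $\lin$ acts simply transitively. Hence the little group of $\chi_0$ is all of $\lin$, and the little group of $\chi_1$ is trivial.

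Next I would run the Wigner--Mackey construction for each orbit. For $\{\chi_0\}$ the relevant group is $\tra \rtimes \lin = G$ and $1_\tra$ extends to the trivial character of $G$; tensoring with the inflation of each irreducible character of $\lin \cong \GF{q}^\times \cong \ZZ/(q-1)\ZZ$ and inducing (trivially) from $G$ to $G$ gives the $q-1$ linear characters, and since $\tra$ is the kernel of $G \twoheadrightarrow \lin$ one obtains $\psi_j(\tau_b\lambda_k) = \psi_j(\lambda_k) = \zeta_{q-1}^{jk}$, which is formula~\eqref{eq:psi_rep}. One may double-check this count by verifying $[G,G] = \tra$, since the commutators $\tau_{(\alpha^k - 1)b}$ with $\alpha^k \neq 1$ already exhaust $\tra$ when $q > 2$, so $G/[G,G] \cong \lin$ has order $q-1$. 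For the orbit $\GF{q}^\times$, represented by $\chi_1$, the little group is trivial, so the single associated irreducible is $\Ind_\tra^G(\chi_1) = \st$, of degree $\ind{G}{\tra} = \card{\lin} = q-1$. By the Wigner--Mackey theorem (\cite{book:serre77}*{section~8.2}) the characters $\psi_1, \ldots, \psi_{q-1}, \st$ are pairwise non-isomorphic and exhaust $\Irr_\CC(G)$; there are $q$ of them, in agreement with Proposition~\ref{prop:conj-class}.

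It then remains to check that every Schur index equals $1$. For the $\psi_j$ this is immediate, a degree-one representation being realized over its character field. For $\st$ I would exhibit an explicit rational model: let $G$ act on the set $\GF{q}$ by affine transformations. A point count shows the identity fixes all $q$ points, a nontrivial translation fixes none, and each $\lambda_k$ with $k \neq q-1$ fixes exactly one; hence the permutation character is $1_G + \chi_\st$, and so the deleted permutation module $\{(x_a)_{a \in \GF{q}} \in \QQ^{\GF{q}} : \sum_{a \in \GF{q}} x_a = 0\}$ is a $\QQ$-form of $\st$. Therefore $\st$ has field of definition $\QQ$ and Schur index $1$. As a byproduct, comparing with the decomposition $\chi_{\mathrm{reg}} = \sum_{j} \psi_j + (q-1)\chi_\st$ of the regular character shows that $\chi_\st$ is integer valued, with $\chi_\st(\id) = q-1$, $\chi_\st(\tau) = -1$ and $\chi_\st(\lambda_k) = 0$ for $k \neq q-1$.

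I expect the only genuinely delicate point to be the Schur index of $\st$: the clean route is the deleted permutation module above, and without it one would have to invoke a Schur-index criterion for monomial (induced-from-linear) characters. Everything else is routine bookkeeping once the two $\lin$-orbits on $\widehat{\tra}$, and their little groups, have been identified.
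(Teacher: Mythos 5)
Your proposal is correct and follows essentially the same route as the paper: the Wigner--Mackey method applied to $G = \tra \rtimes \lin$ with the two $\lin$-orbits $\{\chi_0\}$ and $\{\chi_b : b \in \GF{q}^\times\}$ on the character group of $\tra$, and the Schur index of $\st$ settled by exhibiting a rational model via the permutation action on $\GF{q}$ (the paper phrases this as $\chi_\st(g) = \card{\Fix_g(\GF{q})} - 1$, i.e.\ the restriction of the standard representation of the symmetric group, while you make the deleted permutation module explicit). The remaining differences --- the opposite but equivalent convention for the $\lin$-action on characters and the commutator-subgroup sanity check --- are cosmetic.
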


\begin{proof}
  We follow the procedure described in \cite{book:serre77}*{section~8.2}. The
  group \(\lin\) acts on \(\Hom(\tra, \CC^*)\) by
  \begin{equation}
    (\lambda_k\chi_b)(\tau_c) = \chi_b(\lambda_k\tau_c \lambda_k^{-1}) = \chi_b(\tau_{\alpha^{k}(c)}) = \chi_{\alpha^{k}b}(\tau_c)
    = \chi_{\lambda_{k}(b)}(\tau_c),
  \end{equation}
  that is \(\lambda_k\chi_b = \chi_{\lambda_{k}(b)}\). Since \(\lin\) acts transitively on
  \(\GF{q}^\times\) and \(\lambda_k(0) = 0\), there are two \(\lin\)-orbits in
  \(\Hom(\tra, \CC^\times)\), namely \(\{\chi_0\}\) (the orbit of the trivial
  representation of \(\tra\)) and \(\{\chi_b : b \in \GF{q}^\times \}\). Hence
  \(\{\chi_0, \chi_1\}\) is a full system of representatives of the
  \(\lin\)-orbits in \(\Hom(\tra, \CC^\times)\). Set
  \(H_b \coloneq \Stab_\lin (\chi_b)\) for \(b=0,1\), then \(H_0 = \lin\) and
  \(H_1 = \{\id\}\); set \(G_0 \coloneq \tra H_0 = G\) and
  \(G_1 \coloneq \tra H_1 = \tra\). Each map \(\chi_b\) extends to \(G_b\) by
  \(\chi_b(th) = \chi_b(t)\) for \(t \in \tra\), \(h \in H_b\) and
  \(b = 0, 1\). We study the two cases separately.

  For \(b=0\), the complex irreducible representations of \(H_0 = \lin\) are
  described by equation~\eqref{eq:mult_char} which lift to the representations
  described by equation~\eqref{eq:psi_rep} by the quotient map \(G \to \lin\),
  also \(\Ind_G^G(\chi_0 \otimes \psi_j) = \psi_j\). Since \(\deg(\psi_j) = 1\) we have
  \(\s(\psi_j) = 1\). Note that \(\psi_{q-1} = 1_G\).

  For \(b=1\), the only complex irreducible representation of \(H_1 = \{\id\}\)
  is the trivial representation \(1_{H_1}\), which lifts to the trivial
  representation \(1_\tra\) by the quotient \(\tra \to \{\id\}\). Thus we obtain
  the irreducible representation
  \(\st \coloneq \Ind_\tra^G(\chi_1 \otimes 1_\tra) = \Ind_\tra^G(\chi_1)\) of
  equation~\eqref{eq:standard_rep}.

  According to \cite{book:serre77}*{Proposition~25}, those are all the
  irreducible representations of \(G\).

  The character of \(\st\) may be computed from
  \(\chi_\st(\id) = \deg(\st) = q-1\) and orthogonality relations for the rest of
  the conjugacy classes of \(G\); it is given in
  Table~\ref{table:complex_char_table}. Note that
  \(\chi_\st(g) = \card{\Fix_g(\GF{q})} - 1\), where \(\Fix_g(\GF{q})\) denotes
  the subset of \(\GF{q}\) fixed by \(g\), and hence \(\st\) is equivalent to
  the restriction of the standard representation of the symmetric group over
  \(\GF{q}\); therefore, the representation \(\st\) is realizable over the
  integers and its Schur index is \(1\).
\end{proof}

\begin{corollary}[Of the proof]
  The character table of \(G\) is given by
  Table~\ref{table:complex_char_table}.
\end{corollary}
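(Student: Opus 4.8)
The plan is to verify that the data assembled in Table~\ref{table:complex_char_table} is a bona fide character table, i.e.\ that the $q$ functions $\{\psi_1,\ldots,\psi_{q-1},\chi_\st\}$ on the $q$ conjugacy classes $\{[\lambda_1],\ldots,[\lambda_{q-1}],[\tau]\}$ of $G$ are exactly the irreducible characters of $G$, with the stated values. Almost all of this has already been established inside the proof of Theorem~\ref{thm:comp-rep-aff}: we have produced $q$ pairwise non-isomorphic complex irreducible representations (the $q-1$ linear characters $\psi_j$ coming from the little group $H_0=\lin$, and the $(q-1)$-dimensional $\st=\Ind_\tra^G(\chi_1)$ from the little group $H_1=\{\id\}$), and by Proposition~\ref{prop:conj-class} together with \cite{book:serre77}*{Proposition~25} these are all of them. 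So the only thing left is to pin down the entries of the table.

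First I would record the values of the linear characters: from equation~\eqref{eq:psi_rep}, $\psi_j(\tau_b\lambda_k)=\zeta_{q-1}^{jk}$, so on the class $[\lambda_k]$ we get $\psi_j(\lambda_k)=\zeta_{q-1}^{jk}$ (for $k=1,\ldots,q-1$), and on the class $[\tau]$, since $\tau=\tau_1\lambda_{q-1}$, we get $\psi_j(\tau)=\zeta_{q-1}^{j(q-1)}=1$. Next I would compute $\chi_\st$. The cleanest route is the one indicated at the end of the proof of Theorem~\ref{thm:comp-rep-aff}: identify $\st$ with the restriction to $G\subset\Sym_q$ of the standard $(q-1)$-dimensional permutation representation on $\GF{q}$, so that $\chi_\st(g)=\card{\Fix_g(\GF{q})}-1$. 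An element $\tau_b\lambda_k$ acts on $\GF{q}$ by $x\mapsto\alpha^k x+b$; for $k=q-1$ this is the identity with $q$ fixed points, for $k\ne q-1$ the unique fixed point is $b/(1-\alpha^k)$. Hence $\chi_\st(\lambda_k)=0$ for $k=1,\ldots,q-2$, $\chi_\st(\lambda_{q-1})=\chi_\st(\id)=q-1$, and $\chi_\st(\tau)=1-1=0$. Alternatively one gets the same values from the induction formula $\chi_\st=\Ind_\tra^G\chi_1$ evaluated on class representatives, or from the column orthogonality relations $\sum_{\chi}\card{\chi(\id)}^2=\card{G}$ and $\sum_\chi\overline{\chi(\id)}\chi(g)=0$ for $g\ne\id$, using the already-known linear characters: at $g=\lambda_k$ with $1\le k\le q-2$ one finds $0=\sum_{j=1}^{q-1}\zeta_{q-1}^{jk}+(q-1)\chi_\st(\lambda_k)$, and the geometric sum $\sum_{j=1}^{q-1}\zeta_{q-1}^{jk}$ vanishes since $q-1\nmid k$, forcing $\chi_\st(\lambda_k)=0$; at $g=\tau$ one gets $0=\sum_{j=1}^{q-1}1+(q-1)\chi_\st(\tau)=(q-1)+(q-1)\chi_\st(\tau)$, so $\chi_\st(\tau)=-1$; wait—this must be reconciled with the permutation count, and indeed $\chi_\st(\tau)=\card{\Fix_\tau}-1=1-1=0$, so the correct column relation at $\tau$ uses $\psi_j(\tau)=1$ only if $\lin$-characters are trivial there, which they are, giving $\sum_j \psi_j(\tau)=q-1$ and hence I would instead double-check that $[\tau]$ is read off correctly; the permutation-character computation is unambiguous and I would take it as primary, using orthogonality only as a consistency check.

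There is no real obstacle here: the statement is a bookkeeping corollary, and every ingredient—the classification of irreducibles, the class list, the formula $\chi_\st(g)=\card{\Fix_g(\GF q)}-1$—is already in hand. The one point that requires a little care is the evaluation on the class $[\tau]$: one must use that $\tau$ is an order-$p$ translation (so it fixes exactly the single point at which an affine map $x\mapsto x+1$ would be fixed, namely none in the affine line, giving $\card{\Fix_\tau(\GF q)}=0$ when $p\mid$ nothing cancels—here $x+1=x$ has no solution, so $\chi_\st(\tau)=0-1=-1$), and this value $-1$ is exactly what makes the orthogonality relation $\sum_\chi\overline{\chi(\id)}\chi(\tau)=(q-1)\cdot 1\cdot\frac{?}{}$ work out; I would present the permutation-representation argument, note the resulting entry, and remark that the full table then satisfies both row and column orthogonality, which certifies correctness. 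This completes the corollary.
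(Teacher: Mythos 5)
Your overall strategy coincides with the paper's: the classification of the irreducible representations and of the conjugacy classes is already contained in Theorem~\ref{thm:comp-rep-aff} and Proposition~\ref{prop:conj-class}, so the corollary reduces to evaluating the characters on class representatives, which the paper does via orthogonality and then cross-checks against the permutation-character identity \(\chi_\st(g)=\card{\Fix_g(\GF{q})}-1\). The problem is that, as written, your argument asserts two incompatible values for \(\chi_\st(\tau)\) and only settles on the right one after declaring the wrong one ``unambiguous'' and ``primary''. The source of the confusion is the fixed-point count: the formula ``the unique fixed point is \(b/(1-\alpha^k)\)'' applies only when \(\alpha^k\neq 1\), whereas \(\tau=\tau_1\lambda_{q-1}\) has \(\alpha^{q-1}=1\) and \(b=1\neq 0\), so \(x+1=x\) has \emph{no} solution, \(\card{\Fix_\tau(\GF{q})}=0\), and \(\chi_\st(\tau)=0-1=-1\), not \(1-1=0\). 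This is precisely what column orthogonality at \(\tau\) forces, since \(\sum_{j=1}^{q-1}\psi_j(\tau)+(q-1)\chi_\st(\tau)=(q-1)+(q-1)\chi_\st(\tau)=0\), and it is the entry appearing in Table~\ref{table:complex_char_table}. Your final paragraph does eventually reach \(-1\), but the intermediate passage --- the claim \(\chi_\st(\tau)=0\), the unresolved ``wait'', and the dangling orthogonality expression --- must be excised and replaced by the one-line computation above; a proof that states both \(\chi_\st(\tau)=0\) and \(\chi_\st(\tau)=-1\) is not acceptable as it stands. With that repair the argument is correct and essentially identical to the paper's, which likewise uses the degree, orthogonality on the remaining classes, and the fixed-point formula (the latter chiefly to conclude that \(\st\) is realizable over \(\ZZ\) and has Schur index \(1\)).
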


\begin{table}
  \caption{Character table of \(\GA{q}\)}
  \label{table:complex_char_table}
  \begin{tabular}{@{} l *{6}{c} @{}} \toprule
    & \(1\) & \(q-1\) & \(q\) & \(q\) & & \(q\)\\
    & \(\id\) & \(\tau\) & \(\lambda\) & \(\lambda_2\) & \(\cdots\) & \(\lambda_{q-2}\) \\ \midrule
    \(1_G\) & \(1\) & \(1\) & \(1\) & \(1\) & \(\cdots\) & \(1\) \\
    \(\psi_1\) & \(1\) & \(1\) & \(\zeta_{q-1}\)& \(\zeta_{q-1}^2\)& \(\cdots\) & \(\zeta_{q-1}^{q-2}\) \\[1ex]
    \(\psi_2\) & \(1\) & \(1\) & \(\zeta_{q-1}^2\)& \(\zeta_{q-1}^4\)& \(\cdots\) & \(\zeta_{q-1}^{2(q-2)}\) \\
    \;\(\vdots\) & \(\vdots\) & \(\vdots\) & \(\vdots\) & \(\vdots\) & \(\ddots\) & \(\vdots\) \\
    \(\psi_{q-2}\) & \(1\) & \(1\) & \(\zeta_{q-1}^{q-2}\) & \(\zeta_{q-1}^{q-3}\) & \(\cdots\)
                                        & \(\zeta_{q-1}\)\\[1ex]
    \(\st\) & \(q-1\) & \(-1\) & \(0\) & \(0\) & \(\cdots\) & \(0\) \\ \bottomrule
  \end{tabular}
\end{table}

\subsection{Rational irreducible representations}
\label{sec:irrQ}

Now we describe the rational irreducible representations of \(G\) based on the
results of sections~\ref{sec:rational-rep}~and~\ref{sec:irrC}, we keep the
notation of the section~\ref{sec:irrC}. Let \(\nd(n)\) denote the number of
positive divisors of an integer \(n\).

\begin{proposition}
  \label{prop:rat-conj}
  The group \(G\) has \(\nd(q-1) + 1\) rational conjugacy classes. They are
  \([\tau]_{\QQ}\) and \([\lambda_d]_{\QQ}\) for \(d \mid q-1\).
\end{proposition}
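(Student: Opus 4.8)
The plan is to count the rational conjugacy classes of $G$ directly from the list of its ordinary conjugacy classes in Proposition~\ref{prop:conj-class}. I would begin by recalling the standard description of a rational class: $[g]_{\QQ}$ is the union of the ordinary classes $[g^m]$ taken over all integers $m$ coprime to the order of $g$; equivalently, $g$ and $h$ are rationally conjugate exactly when the cyclic subgroups $\gen{g}$ and $\gen{h}$ are conjugate in $G$ (see \cite{book:serre77}), so counting rational classes amounts to counting conjugacy classes of cyclic subgroups. Two consequences organize the argument: all elements of a single rational class share the same order, so — since $\gcd(p,q-1)=1$ — no nontrivial translation is rationally conjugate to any $\lambda_k$; and, by Proposition~\ref{prop:conj-class}, every element of $G$ lies in $[\tau]$ or in some $[\lambda_k]$, so it suffices to decide which of those ordinary classes are glued together.

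For the translations: $\tau$ has order $p$, and every power $\tau^m$ with $p\nmid m$ is again a nontrivial translation (namely $\tau_c$ with $c$ the nonzero image of $m$ in $\GF{q}$), hence lies in $[\tau]$ by Proposition~\ref{prop:conj-class}. Therefore $[\tau]_{\QQ}=[\tau]$ is a single rational class, and it exhausts the nontrivial elements of $\tra$.

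For the linear part: given $k\in\{1,\dots,q-1\}$ I would set $d=\gcd(k,q-1)$, a divisor of $q-1$. Then $\lambda_k$ and $\lambda_d$ both generate the unique subgroup of $\lin\cong\GF{q}^{\times}$ of order $(q-1)/d$, so $\lambda_k$ is a power of $\lambda_d$ by an exponent coprime to its order, whence $[\lambda_k]_{\QQ}=[\lambda_d]_{\QQ}$. Conversely, distinct divisors $d\ne d'$ of $q-1$ yield $\lambda_d$ and $\lambda_{d'}$ of distinct orders $(q-1)/d\ne(q-1)/d'$, so these are not rationally conjugate. Hence the rational classes meeting $\lin$ are precisely the $\nd(q-1)$ classes $[\lambda_d]_{\QQ}$, one for each divisor $d\mid q-1$ (the case $d=q-1$ giving the identity). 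Together with $[\tau]_{\QQ}$ this accounts for $\nd(q-1)+1$ rational classes, as claimed.

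I do not expect a real obstacle here: once Proposition~\ref{prop:conj-class} supplies the complete list of ordinary classes, everything reduces to the elementary fact that two generators of one cyclic group are powers of each other by exponents coprime to its order, together with the invariance of order under rational conjugacy. The only point to watch is that the classes produced are pairwise distinct and jointly exhaust $G$ — both immediate from the above, since a rational class is a union of ordinary classes and $p\nmid q-1$.
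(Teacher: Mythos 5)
Your proof is correct and follows essentially the same route as the paper's: both reduce the question to deciding when two of the ordinary classes from Proposition~\ref{prop:conj-class} generate conjugate cyclic subgroups, observing that $[\tau]_{\QQ}=[\tau]$ and that $[\lambda_k]_{\QQ}$ depends only on $\gcd(k,q-1)$. You simply spell out in more detail (order considerations, uniqueness of subgroups of the cyclic group $\lin$) what the paper states in one line.
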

\begin{proof}
  According to Proposition~\ref{prop:conj-class}, the conjugacy classes of
  \(G\) are \([\tau]\) and \([\lambda_k]\) for \(k=1,\ldots,q-1\). Note that two different
  class representatives generate conjugate cyclic subgroups of \(G\) if and
  only if they are \([\lambda_k]\) and \([\lambda_j]\) with
  \(\gcd(k,q-1) = \gcd(j,q-1)\). Hence, there is one rational conjugacy class
  for each divisor of \(q-1\), and \([\tau]_{\QQ} = [\tau]\).
\end{proof}

\begin{theorem}
  \label{thm:rat-rep-aff}
  There are \(\nd(q-1) + 1\) rational irreducible representations of \(G\) up
  to isomorphism, \(\nd(q-1)\) of them are given by
  \[
    \xi_d \coloneq \bigoplus_{\mathclap{\substack{ 1\leq k \leq q-1\\ \gcd(k,q-1)=d}}} \psi_k \quad
    \text{for \(d \mid q-1\).}
  \]
  And the other one is \(\st\) as defined by equation~\eqref{eq:standard_rep}.
\end{theorem}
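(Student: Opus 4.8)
The plan is to determine the rational irreducible representations of $G$ by combining the Galois-association machinery of Section~\ref{sec:rational-rep} with the explicit complex character table of Theorem~\ref{thm:comp-rep-aff}. First I would recall from equation~\eqref{eq:rat-decomp} that every rational irreducible representation arises from a single complex irreducible representation $\varsigma$ by taking the (Schur-index-weighted) sum over the orbit of $\varsigma$ under $\Gal(K/\QQ)$, where $K$ is the character field of $\varsigma$; and conversely each complex irreducible representation is a summand of exactly one rational irreducible representation. Since Theorem~\ref{thm:comp-rep-aff} tells us all complex irreducible representations have Schur index $1$, the rational irreducible representations are in bijection with the Galois orbits on $\Irr_{\CC}(G) = \{\psi_1,\ldots,\psi_{q-1},\st\}$, and by Proposition~\ref{prop:rat-conj} there are exactly $\nd(q-1)+1$ of them, so the count will come out automatically once the orbits are identified.

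Next I would handle the two types of complex representations separately. For the standard representation $\st$: its character, read off from Table~\ref{table:complex_char_table}, takes only the integer values $q-1$, $-1$, and $0$, so its character field $K$ is $\QQ$, the Galois orbit is a singleton, and $\st$ is already rational (consistent with the last line of the proof of Theorem~\ref{thm:comp-rep-aff}, which notes $\st$ is realizable over $\ZZ$). This accounts for one rational irreducible representation. For the linear characters $\psi_j$ with $\psi_j(\tau_b\lambda_k) = \zeta_{q-1}^{jk}$: the character field of $\psi_j$ is $\QQ(\zeta_{q-1}^j) = \QQ(\zeta_{(q-1)/\gcd(j,q-1)})$, and the action of $\Gal(\QQ(\zeta_{q-1})/\QQ) \cong (\ZZ/(q-1)\ZZ)^{\times}$ sends $\psi_j$ to $\psi_{tj}$ for $t$ coprime to $q-1$. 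Hence the Galois orbit of $\psi_j$ is exactly $\{\psi_k : \gcd(k,q-1) = \gcd(j,q-1)\}$, so the orbits are indexed by the divisors $d \mid q-1$ via $d = \gcd(j,q-1)$, giving $\nd(q-1)$ rational irreducible representations, each of the form $\xi_d = \bigoplus_{\gcd(k,q-1)=d}\psi_k$ as claimed. Adding $\st$ yields the total $\nd(q-1)+1$, matching Proposition~\ref{prop:rat-conj}.

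The main point requiring care — though it is not really an obstacle — is justifying that the Galois orbit of $\psi_j$ consists of precisely the $\psi_k$ with the same value of $\gcd(k,q-1)$. This needs two observations: that $\Gal(\QQ(\zeta_{q-1})/\QQ)$ acts on the $(q-1)$-st roots of unity through $(\ZZ/(q-1)\ZZ)^{\times}$, transitively on primitive $m$-th roots for each $m \mid q-1$; and that the map $j \mapsto tj \bmod (q-1)$ preserves $\gcd(j,q-1)$ and is transitive on each $\gcd$-level set. Both are standard cyclotomic facts. I would then invoke equation~\eqref{eq:rat-decomp} with $\s(\varsigma) = 1$ to conclude that $\xi_d$ (respectively $\st$) is exactly the rational irreducible representation Galois associated to any $\psi_k$ in the orbit (respectively to $\st$), and that these exhaust $\Irr_{\QQ}(G)$ because the number of rational irreducible representations always equals the number of rational conjugacy classes, already computed to be $\nd(q-1)+1$ in Proposition~\ref{prop:rat-conj}. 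This closes the argument.
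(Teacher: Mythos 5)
Your proposal is correct and follows essentially the same route as the paper's proof: both identify the rational irreducible representations as Galois orbits of the complex characters (using Schur index $1$ from Theorem~\ref{thm:comp-rep-aff}), compute the character field of $\psi_j$ as $\QQ(\zeta_{(q-1)/\gcd(j,q-1)})$, and use transitivity of the cyclotomic Galois action on primitive roots of unity to show the orbit of $\psi_j$ is $\{\psi_k : \gcd(k,q-1)=\gcd(j,q-1)\}$, with $\st$ already rational. Your explicit appeal to Proposition~\ref{prop:rat-conj} for the count is a minor presentational addition, not a different argument.
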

\begin{proof}
  According to equation~\eqref{eq:rat-decomp}, a rational irreducible
  representation \(\xi_d\) Galois associated to \(\psi_d\) for \(d\mid q-1\) is given
  by \(\xi_d = \bigoplus_{\gamma \in \Gal(K/\QQ)}\psi_d^\gamma\) (recall from
  Theorem~\ref{thm:comp-rep-aff} that the Schur index of each complex
  irreducible representation of \(G\) is \(1\)), where \(K\) is the character
  field of \(\psi_d\), that is
  \(K = \QQ(\zeta_{q-1}^d) =
  \QQ(\zeta_{(q-1)/d})\). Table~\ref{table:complex_char_table} yields
  \(\psi_d^\gamma(\lambda) = \gamma(\zeta_{(q-1)/d})\). Since \(K\) is a cyclotomic field, the Galois
  group \(\Gal(K/\QQ)\) acts transitively on the set of primitive
  \((q-1)/d\)-roots of unity; hence, for \(k=1, \ldots, q-1\), there is a
  \(\gamma\in\Gal(K/\QQ)\) such that
  \(\gamma(\zeta_{(q-1)/d}) = \zeta_{q-1}^{k}\) if and only if
  \(\gcd(k,q-1) = d\). This yields the first part of the theorem. That \(\st\)
  is realizable over \(\QQ\) was already proven.
\end{proof}

For convenience, let \(d_1, \ldots, d_r\) denote the positive divisors of
\(q-1\) in decreasing order. With this notation \(\xi_{d_1} = \xi_{q-1}\) is
equivalent to \(1_G\).

\subsection{Subgroups}
\label{sec:subgroups}

In order to decompose the Prym varieties of the intermediate coverings of a
Galois covering \(\pi_G\colon X \to Y\) through Proposition~\Ref{prop:prym-decomp},
we give a characterization of the subgroups of \(G\) up to conjugacy. Recall
that, for each divisor \(m\) of \(n\), the field \(\mathbb{F}_{q}\) is an
extension of \(\GF{p^m}\) of degree \(n/m\) (see
\cite{book:lidl97}*{Theorem~2.6}). In particular, since \(\tra \cong \GF{q}\), it
is a \((n/m)\)-dimensional vector space over any field \(\GF{p^m}\) with
\(m \mid n\), where scalar multiplication is given by
\[
  a\cdot \tau_b = \tau_{ab} \quad \text{for all \(a\in \GF{p^m}\) and \(\tau_b \in \tra\)}.
\]

\begin{remark}
  \label{rem:mult_is_conj}
  Scalar multiplication by \(\alpha^k\) is equivalent to conjugacy by \(\lambda_k\).
\end{remark}
For cleaner notation, let \(\lin_d\) denote the unique subgroup of \(\lin\) of
order \(d\), namely \(\gen{\lambda_{(q-1)/d}}\).

\begin{theorem}
  \label{thm:subgroups}
  Each subgroup \(H\) of \(G\) is conjugate to a subgroup
  \(V\rtimes \lin_d \subset G\) where \(d\mid q-1\) and \(V\) is a
  \(\GF{p^m}\)-linear subspace of \(\tra\) with \(m\) the least positive
  divisor of \(n\) such that \(d \mid p^m -1\).
\end{theorem}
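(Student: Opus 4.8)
The plan is to analyze the structure of an arbitrary subgroup $H$ of $G = \tra \rtimes \lin$ by intersecting it with the normal translation subgroup $\tra$ and projecting onto the quotient $\lin \cong \GF{q}^\times$. First I would set $V \coloneq H \cap \tra$, which is a normal subgroup of $H$ contained in the abelian group $\tra$; since $\tra \cong \GF{q}$ is a vector space over $\GF{p}$, $V$ is at least an $\GF{p}$-subspace. Let $\bar H \coloneq H\tra/\tra \subset G/\tra \cong \lin$ be the image of $H$ under the canonical projection; being a subgroup of the cyclic group $\lin$ of order $q-1$, it equals $\lin_d$ for a unique $d \mid q-1$, so $H/V \cong \lin_d$. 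The short exact sequence $1 \to V \to H \to \lin_d \to 1$ splits because $\lin_d$ has order $d$ coprime to $p = \operatorname{char}\GF{q}$ while $V$ is a $p$-group: by the Schur–Zassenhaus theorem (or directly, since $H \subset G$ and $G$ itself splits as $\tra \rtimes \lin$, one can conjugate a complement of $\tra$ in $G$ into $H$) there is a subgroup $C \subset H$ with $C \cong \lin_d$ and $H = V \rtimes C$.

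The next step is to move $C$ into standard position. Every complement of $\tra$ in $G$ is a conjugate of $\lin$ by an element of $\tra$ (this is standard for a split extension with abelian kernel, or can be seen by a direct $1$-cocycle computation: a complement is $\{\tau_{\beta(k)}\lambda_k : \lambda_k \in \lin\}$ for a crossed homomorphism $\beta$, and $H^1(\lin,\tra)=0$ since $\gcd(q-1,|\tra|)=1$). Hence, after conjugating $H$ by a suitable translation $\tau_c$, I may assume $C = \lin_d = \gen{\lambda_{(q-1)/d}}$, so $H = V \rtimes \lin_d$ with $V$ an $\GF{p}$-subspace of $\tra$ stable under $C$. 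By Remark~\ref{rem:mult_is_conj}, stability of $V$ under conjugation by $\lambda_{(q-1)/d}$ is exactly closure of $V$ under multiplication by $\alpha^{(q-1)/d}$, i.e.\ by the generator of the order-$d$ subgroup $\GF{q}^\times_d \subset \GF{q}^\times$. Thus $V$ is a module over the subring $R \coloneq \GF{p}[\GF{q}^\times_d] \subset \GF{q}$ generated by the $d$-th roots of unity. The final point is to identify this subring: $R = \GF{p}(\zeta)$ where $\zeta$ is a primitive $d$-th root of unity in $\GF{q}$, and the degree $\extdeg{\GF{p}(\zeta)}{\GF{p}}$ equals the multiplicative order of $p$ modulo $d$, which is precisely the least positive $m$ with $d \mid p^m - 1$; moreover such an $m$ divides $n$ because $d \mid q-1 = p^n-1$. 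Hence $R = \GF{p^m}$, and $V$ being an $R$-submodule of $\tra$ is exactly an $\GF{p^m}$-linear subspace of $\tra$, which is what was claimed.

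The main obstacle I anticipate is the bookkeeping around the conjugation step: one must be careful that conjugating $H$ to put its complement $C$ in the form $\lin_d$ can be done by a single element, and that this is compatible with the conclusion being "up to conjugacy." This is handled cleanly by first using that all complements of $\tra$ in $G$ are $\tra$-conjugate, then observing that the $\lin$-conjugates of the fixed subgroup $\lin_d$ are all of $\lin$-stabilizer-type so nothing further is needed — any subgroup conjugate to $V \rtimes \lin_d$ is again of the asserted form with the same $d$ and a translated-then-scaled $V$, still $\GF{p^m}$-linear. The only genuinely arithmetic ingredient, the identification of $\GF{p}[\GF{q}^\times_d]$ with $\GF{p^m}$ for $m = \min\{m' > 0 : d \mid p^{m'}-1\}$, follows from the standard theory of finite fields (\cite{book:lidl97}*{Chapter~2}): the $d$-th roots of unity in $\overline{\GF{p}}$ generate exactly $\GF{p^m}$, and all of them already lie in $\GF{q}$ since $d \mid q-1$.
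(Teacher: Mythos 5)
Your proposal is correct. The overall skeleton matches the paper's: set $V \coloneq H \cap \tra$, identify the image of $H$ in $\lin$ as $\lin_d$, reduce to $H = V \rtimes \lin_d$ up to conjugacy, and then observe that invariance of $V$ under conjugation by a generator of $\lin_d$ (i.e.\ under multiplication by a primitive $d$-th root of unity $\alpha^{(q-1)/d}$, via Remark~\ref{rem:mult_is_conj}) forces $V$ to be a module over $\GF{p}(\alpha^{(q-1)/d}) = \GF{p^m}$ with $m$ the multiplicative order of $p$ modulo $d$. Where you differ is in how the splitting and normalization of the complement are obtained: the paper argues elementarily, choosing $e$ minimal with $\lambda_e$ in some conjugate of $H$ and using the division algorithm together with the conjugacy-class description of Proposition~\ref{prop:conj-class} to show that $e$ divides the $\lin$-exponent of every element of $H$, which yields $H = V \rtimes \gen{\lambda_e}$ directly; you instead invoke Schur--Zassenhaus to split $1 \to V \to H \to \lin_d \to 1$ and then the vanishing of $H^1(\lin_d,\tra)$ (coprime orders) to conjugate the complement onto $\lin_d$ by a translation. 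Both routes are valid; yours is more structural and generalizes readily, the paper's avoids cohomology entirely. One small wording slip: you state the conjugacy lemma for \emph{complements of $\tra$ in $G$}, but the subgroup $C$ you need to move is an order-$d$ subgroup projecting onto $\lin_d$, which is a complement of $\tra$ in $G$ only when $d = q-1$; the cocycle computation you sketch does apply verbatim with $\lin_d$ in place of $\lin$ (any such $C$ is $\{\tau_{\beta(k)}\lambda_k\}$ for a $1$-cocycle $\beta$ on $\lin_d$, which is a coboundary), and conjugating by the resulting translation fixes $V$ pointwise, so this is a matter of phrasing rather than a gap.
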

\begin{proof}
  Let \(H\) be a subgroup of \(G\) and set
  \(e \coloneq \min\{k : \lambda_k \in gHg^{-1} \cap \lin, g\in G\}\). Then, up to conjugacy, we
  may assume that \(\lambda_e \in H\), hence \(\gen{\lambda_e} \subset H\) and
  \(e \mid k\) for all \(\tau_b\lambda_k \in H\) (otherwise, the minimality of
  \(e\) would be contradicted by the division algorithm). Now set
  \(V \coloneq H \cap \tra\), then \(\tau_b\lambda_k \in H\) if and only if
  \(\tau_b = (\tau_b\lambda_k)\lambda_e^{k/e} \in V\), hence
  \(H = V\rtimes \gen{\lambda_e}\). Set \(d \coloneq (q-1)/e\), then
  \(H = V\rtimes \lin_d\). Also, the subgroup \(V\) must be invariant under
  conjugation by \(\lambda_e\), but, according to Remark~\ref{rem:mult_is_conj}, this
  is the same as being closed under scalar multiplication by \(\alpha^e\). Since
  \(\alpha^e \in \GF{q}^\times\) is a primitive \(d\)-root of unity, we have that
  \([\GF{p}(\alpha^e) : \GF{p}] = m\) where \(m\) is the least positive integer such
  that \(d \mid p^m - 1\) (see \cite{book:lidl97}*{Theorem~2.47}), moreover
  \(m \mid n\) and \(\GF{p}(\alpha^e) = \GF{p^m}\). Therefore \(V\) is closed under
  multiplications by scalars in \(\GF{p^m}\); namely, it is a
  \(\GF{p^m}\)-linear subspace of \(\tra\).

  We must prove that a set \(V\lin_d\) as above is indeed a subgroup of \(G\),
  but this follows directly from the following equation:
  \[
    (\tau_b\lambda_e^{k})(\tau_c\lambda_e^{j}) = \tau_{b+\alpha^{ke}c}\lambda_e^{kj} \quad \text{for all
      \(\tau_b,\tau_c\in V\) and \(k,j\in\ZZ\)}. \qedhere
  \]
\end{proof}

It is possible for two subgroups of the form \(H = V\rtimes \lin_d\) and
\(N = W\rtimes \lin_d\) with \(V\neq W\) to be conjugates, a necessary condition is that
\(\dim_{\GF{p^m}}(V) = \dim_{\GF{p^m}}(W)\), but it is not a sufficient
condition in general (a standard counting argument shows that there are
non-conjugate linear subspaces of the same dimension \(k\) if
\(2\leq k \leq n/m - 2\)). Nevertheless, according to Remark~\ref{rem:Frobenius}, the
decomposition of the representations \(\rho_H\) and \(\rho_N\) only depends on the
cardinality of the \(G\)-conjugacy classes of \(H\) and \(N\), respectively;
therefore, Proposition~\ref{prop:conj-class} implies that
\(\rho_N \cong \rho_H\) if and only if
\(\dim_{\GF{p^m}}(V) = \dim_{\GF{p^m}}(W)\). This motivates the following
definition.

\begin{definition}
  \label{def:type}
  The \emph{type} of a subgroup \(H\) of \(G\) conjugate to \(V\rtimes \lin_d\) with
  \(\dim_{\GF{p}}(V) = k\) is \((d,k)\).
\end{definition}

Note that for a fixed divisor \(d \mid q-1\), there are subgroups of \(G\) of type
\((d,k)\) if and only if \(k\) is a multiple of \(m\), as stated in
Theorem~\ref{thm:subgroups}. This assumption will be implicit each time we
discuss about a subgroup of a given type; we could have defined \(k\) as
\(\dim_{\GF{p^m}}(V)\) instead of \(\dim_{\GF{p}}(V)\) and then we would have
\(k\in\{1,\ldots,n/m\}\), but setting \(k \coloneq \dim_{\GF{p}}(V)\) keeps a cleaner and
more intuitive notation.

\begin{corollary}[of Theorem~\ref{thm:subgroups}]
  \label{cor:subgroups}
  \hfill
  \begin{enumerate}
  \item \label{item:order} The type of a subgroup \(H\) of \(G\) is determined
    by its order.
  \item \label{item:induced_rep} For two subgroups \(H\) and \(N\) of the same
    type we have \(\rho_H \cong \rho_N\).
  \item \label{item:subgroups} A subgroup of type \((d,k)\) has a subgroup of
    type \((e,j)\) if and only if \(e \mid d\) and \(j\leq k\).
  \end{enumerate}
\end{corollary}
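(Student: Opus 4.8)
The plan is to read off each item directly from Theorem~\ref{thm:subgroups} together with the structural facts established in its proof. For item~\ref{item:order}, suppose $H$ is conjugate to $V \rtimes \lin_d$, so that $\card{H} = p^k \cdot d$ where $k = \dim_{\GF{p}}(V)$. Since $q - 1 = p^n - 1$ is coprime to $p$, the factorization $\card{H} = p^k d$ with $d \mid q-1$ recovers both $d$ (the prime-to-$p$ part of $\card{H}$) and $p^k$ (the $p$-part), hence both $d$ and $k$; thus the order of $H$ determines its type $(d,k)$. Conversely, two subgroups with the same type evidently have the same order, so the correspondence type~$\leftrightarrow$~order is a bijection on the relevant set.

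For item~\ref{item:induced_rep}, I would invoke Remark~\ref{rem:Frobenius}: the decomposition of $\rho_H = \Ind_H^G(1_H)$ into rational irreducibles depends only on the cardinalities of the $G$-conjugacy classes of $H$. By Proposition~\ref{prop:conj-class}, the conjugacy classes of $G$ are $[\tau]$ and $[\lambda_k]$ for $k = 1, \ldots, q-1$, so I must count, for each class $C$, the quantity $\card{H \cap C}$. If $H$ is conjugate to $V \rtimes \lin_d$, then $H \cap T = V$ has $p^k$ elements (all but the identity lying in $[\tau]$), and for each $j$ with $\lambda_j \in \lin_d$ the coset $V\lambda_j$ contributes $p^k$ elements to the class $[\lambda_j]$; the identity $\id = \lambda_{q-1}$ is its own class. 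All of these counts depend only on $d$ and $k$, not on the particular subspace $V$. Hence two subgroups of the same type meet every $G$-conjugacy class in the same number of elements, so $\rho_H \cong \rho_N$ by Frobenius reciprocity. (This is exactly the observation already made in the paragraph preceding Definition~\ref{def:type}.)

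For item~\ref{item:subgroups}, fix a subgroup conjugate to $H = V \rtimes \lin_d$ of type $(d,k)$. If $N \subset H$ has type $(e,j)$, then $\lin_e \cap \lin_d$ forces $e \mid d$ (as $\lin$ is cyclic and $\lin_e, \lin_d$ are its unique subgroups of those orders), and $j = \dim_{\GF{p}}(N \cap \tra) \leq \dim_{\GF{p}}(H \cap \tra) = k$ since $N \cap \tra \subseteq H \cap \tra$. Conversely, given $e \mid d$ and $j \leq k$, I would construct a subgroup of type $(e,j)$ inside $H$ explicitly: since $e \mid d$ we have $\lin_e \subset \lin_d$, and since $V$ is a $\GF{p^m}$-subspace containing (by Theorem~\ref{thm:subgroups}) a $\GF{p}$-basis of size $k \geq j$, I can pick a $\GF{p^{m'}}$-linear subspace $W \subseteq V$ of $\GF{p}$-dimension $j$, where $m'$ is the least divisor of $n$ with $e \mid p^{m'} - 1$; note $m' \mid m$ since $e \mid d$ and hence the $\GF{p^{m'}}$-structure is a restriction of the $\GF{p^m}$-structure, and $W \rtimes \lin_e$ is a subgroup of $H$ by the closure computation at the end of the proof of Theorem~\ref{thm:subgroups}. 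The one point requiring a small argument is that $j$ being a multiple of $m'$ (which is implicit in "$N$ has type $(e,j)$") is compatible with $W \subseteq V$: this holds because $m' \mid m \mid k$ forces nothing obstructive, and among the $k/m'$ many $\GF{p^{m'}}$-dimensions available inside $V$ we may realize any $j/m'$ of them.

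The routine computations here are genuinely routine; the only place that deserves care is the converse direction of item~\ref{item:subgroups}, specifically checking that the divisibility constraint $m' \mid j$ is automatically satisfied by the subspace $W$ one constructs inside $V$ — equivalently, that $m' \mid m$. This follows from $e \mid d \mid p^m - 1$ and the minimality defining $m'$, so the main "obstacle" amounts to bookkeeping with the lattice of subfields of $\GF{q}$ rather than any real difficulty.
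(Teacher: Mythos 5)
Your proposal is correct and follows essentially the same route as the paper: order determines the type by unique factorization of \(p^k d\), item~\ref{item:induced_rep} is the conjugacy-class count via Remark~\ref{rem:Frobenius} already noted before Definition~\ref{def:type}, and the sufficiency in item~\ref{item:subgroups} is the same construction of a \(\GF{p^{\hat m}}\)-subspace \(W \subseteq V\) using \(\GF{p^{\hat m}} \subseteq \GF{p^m}\). The only cosmetic difference is in the necessity direction of item~\ref{item:subgroups}, where the paper simply cites Lagrange's theorem together with item~\ref{item:order}, which is slightly cleaner than your intersection argument (your phrase ``\(\lin_e \cap \lin_d\)'' tacitly assumes the subgroups are in standard position, though the argument is easily repaired via \(N\tra/\tra \subseteq H\tra/\tra\)).
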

\begin{proof}
  A subgroup \(H\) of \(G\) is conjugate to a subgroup \(V\rtimes \lin_d\). If
  \(k = \dim_{\GF{p}}(V)\), then \(\card{H} = p^k d\) with \(\gcd(p,d) =
  1\). Hence item~\ref{item:order} follows from the fundamental theorem of
  arithmetic.

  Item~\ref{item:induced_rep} was already discussed above.

  For item~\ref{item:subgroups}, necessity of the condition follows from
  item~\ref{item:order} and Lagrange's theorem. We now prove sufficiency of the
  condition. Let \(H\) be a subgroup of \(G\) of type \((d,k)\), then it is
  conjugate to \(V\rtimes \lin_d\) with \(V\) a \(\GF{p^m}\)-vector space. Since
  \(e \mid d\), we have \(\alpha^{(q-1)/e} \in \GF{p^m}\) and hence
  \(\GF{p^{\hat m}} = \GF{p}(\alpha^{(q-1)/e})\), where \(\hat m\) is the least
  positive divisor of \(n\) such that \(e \mid p^{\hat m} - 1\), is a subfield of
  \(\GF{p^m}\). Therefore \(V\) is also a \(\GF{p^{\hat m}}\)-vector space. Let
  \(W\) be a \(j\)-dimensional \(\GF{p^{\hat m}}\)-linear subspace of \(V\),
  then \(W\rtimes \lin_e\) is a subgroup of \(H\) of type \((e,j)\).
\end{proof}

\begin{example}
  \label{ex:subgroups}
  We give the lattice of subgroups of \(G\) up to conjugacy in the particular
  case where \(q = 9\). The divisors of \(3^2-1 = 8\) are \(1\), \(2\), \(4\)
  and \(8\), but \(1\) and \(2\) are also divisors of \(3^1 - 1 = 2\), hence
  the possible types of subgroups of \(G\) are \((1,0)\), \((1,1)\), \((1,2)\),
  \((2,0)\), \((2,1)\), \((2,2)\), \((4,0)\), \((4,2)\), \((8,0)\) and
  \((8,2)\). The lattice is the following:
  \begin{equation}
    \begin{tikzcd}[arrows=-, sep=small]
      & \lin_1 \ar[dl] \ar[dr] & \\
      \gen{\tau} \ar[d] \ar[dr] && \lin_2 \ar[dl] \ar[dd] \\
      \tra \ar[dr] & \gen{\tau} \rtimes \lin_2 \ar[d] & \\
      & \tra \rtimes \lin_2 \ar[d] & \lin_4 \ar[dl] \ar[d] \\
      & \tra \rtimes \lin_4 \ar[d] & \lin_8 \ar[dl] \\
      & G &
    \end{tikzcd}
  \end{equation}
\end{example}

\section{Galois covers with affine group action}
\label{sec:galois-covers}

Let \(\pi_G \colon X \to Y\) be a Galois covering with \(G\) and
\((g; G_1, \ldots, G_s)\) be its geometric signature. Each \(G_i\) must be a cyclic
subgroups of \(G\); hence, applying Definition~\ref{def:type} to
Proposition~\ref{prop:rat-conj}, they must be of type \((1, 1)\) or
\((d,0)\). Also, by item~\ref{item:order} of Corollary~\ref{cor:subgroups}, the
class of each \(G_i\) depends only on its order, thereby (in our particular
case) the geometric signature depends only on the signature
\begin{equation}
  \label{eq:signature}
  \paren[\big]{g; \underbrace{p,\ldots,p}_{\text{\(a\) times}},
    \underbrace{\vphantom{p}d_1,\ldots,d_1}_{\text{\(b_1\) times}}, \ldots,
    \underbrace{\vphantom{p}d_{r-1},\ldots,d_{r-1}}_{\text{\(b_{r-1}\) times}}},
\end{equation}
with \(a + \smallsum_{i=1}^{r-1} b_i = s\). There is no \(d_r\) in the
signature since \(d_r = 1\). In this section, we prove the main result of this
article; namely, that for \emph{most} values of \(q\) the group algebra
decomposition of \(\J(X)\) is not affordable by Prym varieties (see
Definition~\ref{def:affordable}). We will give a precise meaning of what
\emph{most} means in the last sentence. In this manner we obtain an infinite
family of Galois coverings with group algebra decomposition not affordable by
Prym varieties. Also, for the cases where the decomposition is affordable, we
compute it.

According to Theorem~\ref{thm:rat-rep-aff}, the group \(G\) has \(r\) rational
irreducible representations \(\xi_d\), for \(d \mid q-1\), and one rational
irreducible representation \(\st\), which is also absolutely irreducible and of
degree \(q-1\). Also, each \(\xi_d\) is Galois associated to the complex
irreducible representation \(\psi_d\) of degree \(1\).  Thereby, according to
Proposition~\ref{prop:ga-ab}, the group algebra decomposition of \(\J(X)\) is
of the form
\begin{equation}
  \label{eq:ga-aff}
  \J(X) \sim \paren[\Big]{\prod_{j=1}^r B_j} \times B_{r + 1}^{q-1},
\end{equation}
The group algebra component \(B_1\) corresponds to \(\xi_{d_1}\), the trivial
representation.

For the group algebra decomposition of \(\J(X)\) to be affordable by Prym
varieties, it is necessary for each non-trivial \(B_i\) in
equation~\eqref{eq:ga-aff} to be isogenous to the Prym variety of an
intermediate covering of \(\pi_G\). According to
Corollary~\ref{cor:rep_isolation}, for an intermediate covering \(\pi_N^H\), we
have \(B_i \sim \pr(\pi_N^H)\) if \(\rho_N = \xi_{d_i} \oplus \rho_H\) for
\(i=2,\ldots,r\) or \(\rho_N = \st \oplus \rho_H\) for \(i = r+1\). Moreover, according to
Corollary~\ref{cor:subgroups}, the representations \(\rho_H\) and \(\rho_N\) depend
only on the type of \(H\) and \(N\), respectively.

\begin{lemma}
  \label{lem:rho_H}
  For a subgroup \(H\) of type \((d,k)\) of \(G\), we have
  \[
    \rho_H \cong \paren[\Big]{\bigoplus_{d \mid \delta} \xi_\delta} \oplus \frac{p^{n-k} - 1}{d} \st.
  \]
\end{lemma}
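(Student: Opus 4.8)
The plan is to compute the character of $\rho_H = \Ind_H^G(1_H)$ via Frobenius reciprocity and match it against the known decomposition of $\QQ[G]$-modules. By Corollary~\ref{cor:subgroups}\eqref{item:induced_rep}, $\rho_H$ depends only on the type $(d,k)$, so I may assume $H = V \rtimes \lin_d$ with $V$ a $\GF{p}$-subspace of $\tra$ of dimension $k$; note $\card{H} = p^k d$. First I would record the values of $\chi_{\rho_H}$ on the conjugacy classes $[\tau]$ and $[\lambda_j]$ (for $j = 1,\ldots,q-1$) of Proposition~\ref{prop:conj-class}, using the standard induced-character formula $\chi_{\rho_H}(g) = \frac{\card{G}}{\card{H}}\cdot\frac{\card{[g]\cap H}}{\card{[g]}}$, which holds because $1_H$ has value $1$ on all of $H$. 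For this I need to count, for each class representative $g$, how many elements of $H$ are $G$-conjugate to $g$; since $V$ is $\lin_d$-stable, an element $\tau_b\lambda_{j}^{e}$ (where $e=(q-1)/d$) lies in $H$ precisely when $\tau_b\in V$, and I would sort the count of such elements according to $\gcd(j e, q-1) = (q-1)/d \cdot \gcd(j, d)$ — i.e.\ according to which divisor $\delta$ of $q-1$ with $d\mid\delta$ we land in — together with a separate count on the class $[\tau]$.

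Next I would interpret the resulting character as a non-negative integer combination of the irreducible rational characters $\chi_{\xi_\delta}$ ($\delta\mid q-1$) and $\chi_\st$ from Theorem~\ref{thm:rat-rep-aff}. Equivalently, by Frobenius reciprocity in its multiplicity form, $\inpr{\rho_H}{\psi_k} = \inpr{1_H}{\operatorname{Res}_H\psi_k}_H$ for each linear character $\psi_k$, and $\inpr{\rho_H}{\st} = \inpr{1_H}{\operatorname{Res}_H\st}_H$. Since $\psi_k$ is the lift of a character of $\lin$, its restriction to $H = V\rtimes\lin_d$ is trivial on $V$ and equals $\zeta_{q-1}^{kj}$ on $\lambda_{(q-1)/d}^{\,j}$; hence $\inpr{1_H}{\operatorname{Res}_H\psi_k}$ is $1$ if $\psi_k$ is trivial on $\lin_d$, i.e.\ if $(q-1)/d \mid k$, i.e.\ if $\gcd(k,q-1)$ is a multiple of $(q-1)/d$ — equivalently $d\mid\gcd(k,q-1)$ is NOT quite it; rather $\psi_k|_{\lin_d}=1$ iff $\lin_d\subseteq\ker\psi_k$ iff $(q-1)/d$ divides... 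I would be careful here: $\psi_k(\lambda_{(q-1)/d}) = \zeta_{q-1}^{k(q-1)/d}=1$ iff $(q-1)\mid k(q-1)/d$ iff $d\mid k$. Grouping the $k$ with $d\mid k$ by the value $\delta=\gcd(k,q-1)$ (which runs over multiples of $d$ dividing $q-1$, i.e.\ the $\delta$ with $d\mid\delta\mid q-1$), and recalling $\xi_\delta=\bigoplus_{\gcd(k,q-1)=\delta}\psi_k$, gives exactly the summand $\bigoplus_{d\mid\delta}\xi_\delta$.

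For the coefficient of $\st$: by Proposition~\ref{prop:dim-B_i}-style bookkeeping, or directly, $\deg\rho_H = [G:H] = p^{n-k}d$, while $\dim\bigl(\bigoplus_{d\mid\delta}\xi_\delta\bigr) = \sum_{d\mid\delta\mid q-1}\varphi((q-1)/\delta)$; this telescopes to $d$ (the number of $k\in\{1,\ldots,q-1\}$ with $d\mid k$). Hence the multiplicity $c$ of $\st$ satisfies $p^{n-k}d = d + c(q-1)$, forcing $c = d(p^{n-k}-1)/(q-1) = (p^{n-k}-1)/e = (p^{n-k}-1)/((q-1)/d)$; but I would instead present this more cleanly by writing $\frac{p^{n-k}-1}{d}$ as in the statement — wait, that requires $d = e = (q-1)/d$??? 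I should double-check the indexing convention: in Theorem~\ref{thm:subgroups} the subgroup is $V\rtimes\lin_d$ of order $p^k d$, so $[G:H]=p^{n-k}(q-1)/d$, and then $c=(p^{n-k}(q-1)/d - d')/(q-1)$ where $d'=\dim(\bigoplus_{d\mid\delta}\xi_\delta)$. The main obstacle — and the step deserving the most care — is getting these index/divisor conventions exactly right so that the character computation and the dimension count are mutually consistent and reproduce the stated coefficient $\frac{p^{n-k}-1}{d}\,\st$; once the bookkeeping is pinned down, I would verify the identity by checking it holds on each conjugacy class of $G$ against Table~\ref{table:complex_char_table}, which is a finite and routine check.
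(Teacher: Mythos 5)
Your approach is the same as the paper's: decompose \(\rho_H\) by computing the multiplicities \(\inpr{\rho_H}{\psi_k}\) and \(\inpr{\rho_H}{\st}\) via Frobenius reciprocity from the class data of \(H = V\rtimes\lin_d\). The linear-character half is correct and complete: \(\inpr{1_H}{\operatorname{Res}_H\psi_k}=1\) precisely when \(d\mid k\), and partitioning \(\{k : d\mid k\}\) by \(\delta=\gcd(k,q-1)\) yields exactly \(\bigoplus_{d\mid\delta}\xi_\delta\).

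The gap is the multiplicity of \(\st\), which you explicitly leave ``to be pinned down,'' and the one computation you offer for it contains a miscount: \(\dim\bigl(\bigoplus_{d\mid\delta}\xi_\delta\bigr)=\sum_{d\mid\delta\mid q-1}\varphi((q-1)/\delta)\) is the number of \(k\in\{1,\ldots,q-1\}\) with \(d\mid k\), which is \((q-1)/d\), not \(d\). With that correction and your corrected index \([G:H]=p^{n-k}(q-1)/d\), the degree count does close: \(c=\bigl(p^{n-k}(q-1)/d-(q-1)/d\bigr)/(q-1)=(p^{n-k}-1)/d\), as stated. The paper sidesteps this bookkeeping entirely by computing \(\inpr{\rho_H}{\st}\) directly from Table~\ref{table:complex_char_table}: since \(\chi_\st\) vanishes outside \([\id]\cup[\tau]\), and \(H\) meets \([\id]\) in \(1\) element and \([\tau]\) in \(p^k-1\) elements, one gets \(\inpr{\rho_H}{\st}=\frac{1}{p^kd}\bigl((q-1)-(p^k-1)\bigr)=(p^{n-k}-1)/d\) in one line; you should either adopt that direct computation or fix the dimension count as above.
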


\begin{proof}
  According to Theorem~\ref{thm:subgroups}, the subgroup \(H\) is conjugate to
  a subgroup \(V \rtimes \lin_d\) with \(\dim_{\GF{p}}(V) = k\), hence it has
  \(1\) element in the conjugacy class \([\id]\), \(p^k-1\) elements in
  \([\tau]\) and \(p^k\) in each \([\lambda_{e}]\) for which
  \(e \mid (q-1)/d\). By Remark~\ref{rem:Frobenius} and
  Table~\ref{table:complex_char_table} we have
  \begin{align*}
    \inpr{\rho_H}{\st} &= \frac{1}{p^kd}\paren[\big]{q-1 - (p^k-1)} = \frac{p^{n-k}
                    - 1}{d}\\
    \intertext{and, setting \(\hat d \coloneq \gcd(d,\delta)\),}
    \inpr{\rho_H}{\psi_\delta} &= \frac{1}{p^k d} \paren[\Big]{p^k \sum_{i=1}^d
                      \zeta_{q-1}^{\frac{q-1}{d} \delta i}} = \frac{1}{d} \sum_{i=1}^d
                      \paren[\big]{\zeta_d^{\hat d}}^{(\delta/\hat{d})i} \\
                  &= \frac{\hat d}{d} \sum_{i=1}^{d/\hat{d}} \zeta_{d/\hat{d}}^i =
                    \begin{cases*}
                      1 & if \(d = \hat{d}\), \\
                      0 & if \(d \neq \hat{d}\). \qedhere
                    \end{cases*}
  \end{align*}
\end{proof}

Lemma~\ref{lem:rho_H} and Corollary~\ref{prop:prym-decomp} directly imply the
following result.

\begin{corollary}
  \label{cor:Prym-decom}
  For two subgroups \(H\) and \(N\) of \(G\) with \(H\subset N\) of types
  \((e,j)\) and \((d,k)\), respectively, we have
  \[
    \pr(\pi^H_N) \sim \paren[\Big]{\prod_{\crampedsubstack{
          e \mid d_i \\
          d \nmid d_i}} B_i} \times B_{r+1}^s \quad \text{with
      \(s = \frac{p^{n-j} -1}{e} - \frac{p^{n-k}-1}{d}\).}
  \]
\end{corollary}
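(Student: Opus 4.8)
The plan is to combine Lemma~\ref{lem:rho_H} with item~\ref{item:pr-decomp} of Proposition~\ref{prop:prym-decomp} and read off the multiplicities. First I would apply Lemma~\ref{lem:rho_H} twice: to the subgroup \(H\) of type \((e,j)\) it gives \(\rho_H \cong \bigl(\bigoplus_{e \mid d_i} \xi_{d_i}\bigr) \oplus \frac{p^{n-j}-1}{e}\,\st\), and to the subgroup \(N\) of type \((d,k)\) it gives \(\rho_N \cong \bigl(\bigoplus_{d \mid d_i} \xi_{d_i}\bigr) \oplus \frac{p^{n-k}-1}{d}\,\st\). Since \(H \subset N\), Corollary~\ref{cor:subgroups}\eqref{item:subgroups} forces \(e \mid d\), so every \(d_i\) divisible by \(d\) is also divisible by \(e\); consequently the set of \(\xi_{d_i}\)-summands of \(\rho_N\) is contained in that of \(\rho_H\), and the inner products behave additively: \(\inpr{\rho_H}{\varsigma_i} - \inpr{\rho_N}{\varsigma_i}\) equals \(1\) when \(e \mid d_i\) but \(d \nmid d_i\), equals \(0\) when either both or neither divisibility holds, and equals \(\frac{p^{n-j}-1}{e} - \frac{p^{n-k}-1}{d}\) for the component \(\st\).

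Next I would feed these differences into the formula \(t_i = \bigl(\inpr{\rho_H}{\varsigma_i} - \inpr{\rho_N}{\varsigma_i}\bigr)/\s(\varsigma_i)\) of Proposition~\ref{prop:prym-decomp}\eqref{item:pr-decomp}, using that every complex irreducible representation of \(G\) has Schur index \(1\) by Theorem~\ref{thm:comp-rep-aff}. Thus \(t_i = 1\) exactly for those \(i\) with \(e \mid d_i\) and \(d \nmid d_i\) (recalling \(\varsigma_i = \psi_{d_i}\) is Galois associated to \(\xi_{d_i}\)), and the multiplicity of \(B_{r+1}\) (the component attached to \(\st\)) is \(s = \frac{p^{n-j}-1}{e} - \frac{p^{n-k}-1}{d}\). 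Substituting into \(\pr(\pi^H_N) \sim B_2^{t_2} \times \cdots \times B_r^{t_r}\) — here indexed so that the \(\xi_{d_i}\) components are \(B_2,\ldots,B_r\) and \(\st\) corresponds to \(B_{r+1}\) as in equation~\eqref{eq:ga-aff} — yields precisely the claimed product.

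There is essentially no hard part: the only thing requiring care is the bookkeeping of indices, namely matching the enumeration \(d_1 > \cdots > d_r\) of divisors of \(q-1\) with the labelling of the rational irreducible representations \(\xi_{d_i}\) and hence of the components \(B_1, \ldots, B_r, B_{r+1}\), and checking that \(B_1\) (the trivial representation, corresponding to \(d_1 = q-1\)) never appears since \(d_1\) is divisible by every \(d\), so it is automatically excluded from the product over \(e \mid d_i,\ d\nmid d_i\). One should also note that \(s \geq 0\) follows from \(j \leq k\) and \(e \mid d\), so the expression is a genuine product of abelian varieties; I would remark on this only in passing, as it is immediate.
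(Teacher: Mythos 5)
Your proposal is correct and matches the paper's argument, which simply states that the corollary follows directly from Lemma~\ref{lem:rho_H} and Proposition~\ref{prop:prym-decomp}; you have filled in exactly the intended bookkeeping (applying the lemma to both \(H\) and \(N\), using \(e \mid d\) and \(j \le k\) from Corollary~\ref{cor:subgroups}, and the fact that all Schur indices are \(1\)). No issues.
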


\begin{example}
  We present the lattice of example~\ref{ex:subgroups}, in which \(q=9\), with
  the decomposition of \(\rho_H\) instead of each subgroup \(H\) and
  \(\rho_H - \rho_N\) in the edges corresponding to a Prym variety isogenous to a
  single group algebra component:
  \begin{equation}
    \begin{tikzcd}[arrows=-, column sep=-2em]
      & 1_G \oplus \xi_4 \oplus \xi_2 \oplus \xi_1 \oplus 8\st \ar[dl]\ar[dr] & \\
      1_G \oplus \xi_4 \oplus \xi_2 \oplus \xi_1 \oplus 2\st \ar[d]\ar[dr] && 1_G \oplus \xi_4 \oplus \xi_2 \oplus 4\st
      \ar[dl]\ar[dd]\\
      1_G \oplus \xi_4 \oplus \xi_2 \oplus \xi_1 \ar[dr,"\xi_1"'] &  1_G \oplus \xi_4 \oplus \xi_2 \oplus \st \ar[d] & \\
      &  1_G \oplus \xi_4 \oplus \xi_2 \ar[d,"\xi_2"'] & 1_G \oplus \xi_4 \oplus 2\st \ar[dl] \ar[d] \\
      & 1_G \oplus \xi_4 \ar[d,"\xi_4"'] & 1_G \oplus \st \ar[dl,"\st"] \\
      & 1_G &
    \end{tikzcd}
  \end{equation}
\end{example}

Before proving our main result we must compute the dimension of the group
algebra components. Naturally \(\dim B_1 = g\), for the rest of the components
we have the following lemma (recall the notation of
equation~\eqref{eq:signature}).

\begin{lemma}
  \label{lem:dim-B_i}
  For \(i=2,\ldots,r\), the dimension of \(B_i\) is given by
  \begin{align*}
    \dim B_i &= \varphi\paren[\Big]{\frac{q-1}{d_i}} \paren[\Big]{g-1 + \frac{1}{2}\sum_{d_j \nmid d_i}
               b_j}, \\ 
    \intertext{where \(\varphi\) denotes the \emph{totient} function, and}
    \dim B_{r+1} &= (g-1)(q-1) + \frac{1}{2}\paren[\bigg]{ap^{n-1}(p-1) + (q-1)
                   \sum_{j=1}^{r-1} b_j\paren[\Big]{ 1 - \frac{1}{d_j}}}.
  \end{align*}
\end{lemma}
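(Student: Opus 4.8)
The plan is to apply Proposition~\ref{prop:dim-B_i} directly, specializing its general formula to the representations $\xi_{d_i}$ and $\st$ described in Theorem~\ref{thm:rat-rep-aff}, and then to simplify the resulting sums using Lemma~\ref{lem:rho_H} and the explicit character values from Table~\ref{table:complex_char_table}. Concretely, for $i = 2, \ldots, r$ the complex irreducible representation Galois associated to $\xi_{d_i}$ is $\psi_{d_i}$, which has degree $1$ and field of definition $L_i = \QQ(\zeta_{(q-1)/d_i})$, so $\extdeg{L_i}{\QQ} = \varphi((q-1)/d_i)$ and $\deg(\varsigma_i) = 1$. Plugging these into Proposition~\ref{prop:dim-B_i} gives
\[
  \dim B_i = \varphi\paren[\Big]{\frac{q-1}{d_i}}\paren[\Big]{g - 1 + \frac{1}{2}\sum_{j=1}^s \paren[\big]{1 - \inpr{\rho_{G_j}}{\psi_{d_i}}}}.
\]
The task is then to evaluate $\inpr{\rho_{G_j}}{\psi_{d_i}}$ for each branch point stabilizer $G_j$. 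Recalling from the start of section~\ref{sec:galois-covers} that each $G_j$ is cyclic, hence of type $(1,1)$ (the $p$'s in the signature, i.e.\ $G_j = \gen{\tau}$ up to conjugacy) or of type $(d_j, 0)$ (the $d_j$'s in the signature, i.e.\ $G_j = \lin_{d_j}$), I can read off the multiplicities from Lemma~\ref{lem:rho_H} with $k = 1, d = 1$ in the first case and $k = 0, d = d_j$ in the second. In both cases Lemma~\ref{lem:rho_H} shows $\inpr{\rho_{G_j}}{\xi_\delta} = 1$ when the relevant order divides $\delta$ and $0$ otherwise; translated to the complex level, $\inpr{\rho_{G_j}}{\psi_{d_i}}$ equals $1$ exactly when $d_j \mid d_i$ (with $d_j = 1$ for the $p$-type stabilizers, which always divides), and $0$ otherwise. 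Thus $1 - \inpr{\rho_{G_j}}{\psi_{d_i}}$ vanishes for all $a$ of the $p$-type stabilizers and for those $d_j$-type stabilizers with $d_j \mid d_i$, and equals $1$ for the $b_j$ stabilizers with $d_j \nmid d_i$; summing yields $\sum_{d_j \nmid d_i} b_j$, which is the stated formula for $\dim B_i$.

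For $\dim B_{r+1}$, the associated complex irreducible representation is $\st$ itself, which is absolutely irreducible (so $L_{r+1} = \QQ$, $\extdeg{L_{r+1}}{\QQ} = 1$) and has degree $q - 1$. Proposition~\ref{prop:dim-B_i} then gives
\[
  \dim B_{r+1} = (q-1)(g-1) + \frac{1}{2}\sum_{j=1}^s \paren[\big]{(q-1) - \inpr{\rho_{G_j}}{\st}}.
\]
Here I use the value of $\inpr{\rho_{G_j}}{\st}$ supplied by Lemma~\ref{lem:rho_H}: for a type-$(1,1)$ stabilizer ($k=1$, $d=1$) it is $p^{n-1} - 1$, so $(q-1) - \inpr{\rho_{G_j}}{\st} = p^n - 1 - (p^{n-1}-1) = p^{n-1}(p-1)$; for a type-$(d_j,0)$ stabilizer ($k=0$, $d=d_j$) it is $(q-1)/d_j$, so $(q-1) - \inpr{\rho_{G_j}}{\st} = (q-1)(1 - 1/d_j)$. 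There are $a$ stabilizers of the first kind and $b_j$ of each of the second kinds, so the sum becomes $a\,p^{n-1}(p-1) + (q-1)\sum_{j=1}^{r-1} b_j(1 - 1/d_j)$, giving exactly the claimed expression after dividing by $2$.

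The only real subtlety is the bookkeeping between the complex inner products computed in Lemma~\ref{lem:rho_H} (stated there as $\inpr{\rho_H}{\psi_\delta}$ and $\inpr{\rho_H}{\st}$) and the quantities appearing in Proposition~\ref{prop:dim-B_i}; since every Schur index is $1$ by Theorem~\ref{thm:comp-rep-aff}, there are no Schur-index corrections and $\inpr{\rho_{G_j}}{\varsigma_i}$ is literally the multiplicity from Lemma~\ref{lem:rho_H}, so this is a routine substitution rather than an obstacle. I would also note in passing that the $p$-type stabilizers contribute nothing to $\dim B_i$ for $i \le r$ (since $\tau$ lies in the kernel of every $\psi_{d_i}$) but do contribute to $\dim B_{r+1}$, which is why the first formula only involves the $b_j$ and the second carries the extra $a\,p^{n-1}(p-1)$ term. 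Assembling these two computations completes the proof.
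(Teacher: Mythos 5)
Your proof is correct and follows exactly the route the paper intends: its own proof is the one-line remark that the lemma ``follows directly from Proposition~\ref{prop:dim-B_i} and Lemma~\ref{lem:rho_H},'' and you have simply carried out that substitution in full, with the right values $\extdeg{L_i}{\QQ}=\varphi((q-1)/d_i)$, $\deg\psi_{d_i}=1$, $\deg\st=q-1$, and the inner products from Lemma~\ref{lem:rho_H}. All the intermediate computations check out, so nothing needs to change.
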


\begin{proof}
  Follows directly from Proposition~\ref{prop:dim-B_i} and
  Lemma~\ref{lem:rho_H}.
\end{proof}

Now we can prove our main result.
\begin{theorem}
  \label{thm:main}
  The group algebra decomposition of \(\J(X)\) is affordable by Prym varieties
  if and only if at least one of the following three conditions is met:
  \begin{enumerate}
  \item\label{item:q_con} The integer \(q\) is equal to \(2\) or \(9\), is a
    Fermat prime or \(q-1\) is a Mersenne prime.
  \item\label{item:sign_con} The signature of \(\pi_G\) is of the form
    \begin{equation}
      \label{eq:sign_con}
      (1;p,\ldots,p)\ \text{or}\ (0;p,\ldots,p,q-1,q-1).
    \end{equation}
  \item\label{item:mix_con} The integer \(q-1\) is equal to \(d^\mu e^\nu\), where
    \(d\) and \(e\) are different prime numbers and \(\mu\) and \(\nu\) are
    positive integers, and the signature of \(\pi_G\) is of the form
    \begin{equation}
      \label{eq:mix_con}
      \begin{gathered}
        (1;p,\ldots,p,d,\ldots,d), (0;p,\ldots,p,q-1,q-1,d,\ldots,d), \\
        (0;p,\ldots,p,q-1, e^\nu, d, \ldots, d)\ \text{or}\ (0;p,\ldots,p,e^\nu, e^\nu, d, \ldots, d)
      \end{gathered}
    \end{equation}
    (the last two signatures are only possible
    if \(\mu = 1\)).
  \end{enumerate}
\end{theorem}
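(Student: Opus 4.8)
The plan is to convert affordability into an arithmetic condition on the factorization of $q-1$ against the branch orders of $\pi_G$, and then to run through the resulting list of cases. Two reductions come first. The component $B_1 \sim \J(Y)$ is never in question, and the component $B_{r+1}$ attached to $\st$ is always a Prym variety: for $H = \lin$ and $N = G$, Lemma~\ref{lem:rho_H} gives $\rho_\lin \cong \xi_{q-1} \oplus \st \cong \rho_G \oplus \st$, so Corollary~\ref{cor:rep_isolation} yields $\pr(\pi^\lin_G) \sim B_{r+1}$. Hence the group algebra decomposition of $\J(X)$ is affordable by Prym varieties precisely when every \emph{nontrivial} component $B_j$ with $2 \le j \le r$ --- equivalently, every nontrivial component attached to a divisor $d_j \mid q-1$ with $d_j < q-1$ --- is isogenous to the Prym of an intermediate covering.

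The heart of the argument is a realizability criterion for such a $B_j$. Suppose $B_j \sim \pr(\pi^H_N)$ with $H$ of type $(e,i)$ contained in $N$ of type $(d,k)$. By Corollary~\ref{cor:Prym-decom} the Prym decomposes as $\prod_{e \mid d_\ell,\, d \nmid d_\ell} B_\ell$ times a power of $B_{r+1}$, and since distinct nontrivial components admit no equivariant isogeny (Proposition~\ref{prop:isotypical}) this product must reduce to $B_j$ alone. I would first show $e = d_j$: if $e$ were a proper divisor of $d_j$ then (using $e \mid d$ and $d \nmid d_j$) the factor $B_e$ would occur, and by the monotonicity of the dimension formula of Lemma~\ref{lem:dim-B_i} --- the set $\{d'' \nmid d_j\}$ being contained in $\{d'' \nmid e\}$ --- the component $B_e$ would again be nontrivial, a contradiction. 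Conversely, for any proper multiple $d$ of $d_j$ dividing $q-1$, the subgroups $H = \tra \rtimes \lin_{d_j}$ and $N = \tra \rtimes \lin_{d}$ (both of type $(\cdot, n)$ by Theorem~\ref{thm:subgroups}, so their $\st$-multiplicities vanish) satisfy $\pr(\pi^H_N) \sim \prod_{d_j \mid \delta,\, d \nmid \delta} B_\delta$. Combining the two directions: a nontrivial $B_j$ is a Prym of an intermediate covering if and only if there is a proper multiple $d$ of $d_j$ dividing $q-1$ such that every divisor $\delta$ of $q-1$ with $d_j \mid \delta$, $\delta \ne d_j$ and $d \nmid \delta$ has $B_\delta$ trivial.

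Next I would translate triviality into arithmetic. Writing $\beta_{d''}$ for the number of branch points of $\pi_G$ whose stabilizer has order $d''$ (so only the divisors $d'' \mid q-1$ enter the dimension formula, branch points of order $p$ contributing nothing), Lemma~\ref{lem:dim-B_i} says $B_j$ with $d_j = d' < q-1$ is nontrivial exactly when $g - 1 + \tfrac12 \sum_{d'' \nmid d'} \beta_{d''} > 0$, and $\sum_{d'' \nmid d'} \beta_{d''} = 0$ iff $\ell_0 \mid d'$, where $\ell_0 \coloneq \lcm\{\,d'' \mid q-1 : \beta_{d''} > 0\,\}$. For $g \ge 2$ every such $B_j$ is nontrivial, so by the criterion affordability is equivalent to $(q-1)/d'$ being a prime power for every $d' \mid q-1$, i.e.\ (taking $d' = 1$) to $q-1$ being a prime power. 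Mihailescu's theorem on the equation $p^n - 1 = \ell^t$ then shows this occurs exactly when $q = 2$, $q = 9$, $q$ is a Fermat prime, or $q-1$ is a Mersenne prime, which is condition~\eqref{item:q_con}; and this condition plainly suffices for every $g$.

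It remains to handle $g = 0$ and $g = 1$ assuming $q-1$ is not a prime power, by applying the criterion divisor by divisor. For $g = 1$: if $\ell_0 = 1$ all nontrivial components vanish and the signature is $(1;p,\ldots,p)$; if $\ell_0 > 1$ then $B_r$ (the component at $d = 1$) is nontrivial, and its realizability forces the nontrivial divisors of $q-1$ to share a common prime, which --- as $q-1$ has at least two prime factors --- pins $\ell_0$ to a single prime $d$ and $q-1$ to the shape $d^\mu e^\nu$; inspecting the remaining components then leaves exactly the signature $(1;p,\ldots,p,d,\ldots,d)$. For $g = 0$ the same analysis applies except that each branch point of order $q-1$ contributes a constant $2$ to every $\sum_{d'' \nmid d'} \beta_{d''}$, merely shifting the triviality threshold; combined with the constraint $\dim B_j \ge 0$ (which forces $\mu = 1$ in the last two cases) one is left with exactly the signatures $(0;p,\ldots,p,q-1,q-1)$ of condition~\eqref{item:sign_con} and $(0;p,\ldots,p,q-1,q-1,d,\ldots,d)$, $(0;p,\ldots,p,q-1,e^\nu,d,\ldots,d)$, $(0;p,\ldots,p,e^\nu,e^\nu,d,\ldots,d)$ of condition~\eqref{item:mix_con}, with realizability in each of these checked through the explicit subgroup pairs $(\tra \rtimes \lin_{d_j},\ \tra \rtimes \lin_{d})$ above. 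I expect this last step --- the exhaustive sieve of signatures for $g \in \{0,1\}$, where for every excluded signature one must name a nontrivial $B_j$ whose associated divisor set has two coprime members (most often $B_r$, with $\delta$ running over two coprime prime-power divisors of $q-1$) --- to be the main obstacle.
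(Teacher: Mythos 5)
Your proposal is correct and follows the same overall strategy as the paper: reduce everything to the Prym decomposition of Corollary~\ref{cor:Prym-decom} and the dimension formula of Lemma~\ref{lem:dim-B_i}, split on the genus of \(Y\), and invoke Mihailescu's theorem to characterize when \(q-1\) is a prime power. Two genuine differences are worth recording. First, you isolate a clean per-component realizability criterion (\(B_j\) is a Prym iff some proper multiple \(d\) of \(d_j\) dividing \(q-1\) kills all intermediate divisors), whereas the paper works almost exclusively with the component at \(d_r=1\) and lets the constraints propagate from there; your version makes the sufficiency direction (the content of Table~\ref{tab:decomposition}) essentially automatic, since for a prime power \(q-1\) the divisors form a chain. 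Second, and more substantively, where the paper uses generating vectors (Proposition~\ref{prop:RET}) to pin down the admissible signatures --- to show that the two non-\(p\) branch orders in \((0;p,\ldots,p,*,*)\) must be \(q-1\), that at least two branch points have order divisible by \(e^\nu\) when \(g=0\), and that \(\mu=1\) is forced in the last two signatures --- you derive the same constraints from nonnegativity of \(\dim B_\delta\) applied to well-chosen divisors (e.g.\ \(\delta=(q-1)/\ell\) for each prime \(\ell \mid q-1\), and \(\delta=de^\nu\) when \(\mu\geq 2\)). I checked these substitutions and they do work; they are arguably cleaner since they avoid the Riemann existence theorem entirely. Two cautions: your sentence that a branch point of order \(q-1\) ``contributes a constant \(2\)'' should say it contributes \(1\) to \(\sum_{d''\nmid\delta}\beta_{d''}\) for every \(\delta<q-1\) (the \(2\) comes from there being two such points); and the \(g=0\) sieve, which you rightly flag as the main labor, still needs writing out --- in particular, ruling out a third prime divisor of \(q-1\) requires playing \(\dim B_{dd'}=0\) against \(\dim B_d=0\) and \(\dim B_{d'}=0\), which your sketch does not spell out.
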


\begin{proof}
  We first prove necessity of the conditions in items~\ref{item:q_con}
  to~\ref{item:mix_con}. Hence, assume that the group algebra decomposition of
  \(\J(X)\) is affordable through Prym varieties.

  Suppose that item~\ref{item:q_con} is not met, thus \(q - 1\) is neither
  \(1\) nor a prime number nor a power of a prime number. Indeed:
  \begin{itemize}
  \item If \(q-1 = 1\), then \(q = 2\).
  \item If \(q-1\) is prime, then either \(q\) is odd and then equal to \(3\),
    a Fermat prime, or \(q\) is even and then \(q = 2^n\) and \(q-1\) is a
    Mersenne prime.
  \item If \(p^n-1 = d^\nu\) for \(d\) prime and \(\nu \geq 2\), then we have two
    cases: if \(n=1\), then \(p\) is odd and \(p = 2^\nu+1\), a Fermat prime; if
    \(n > 1\), according to Mihăilescu's theorem (Catalan's conjecture, see
    \cite{art:mihailescu2004}*{Theorem~5}), we have \(q = 9\).
  \end{itemize}
  Therefore \(q-1\) is divisible by at least to different prime numbers. Since
  the group algebra decomposition of \(\J(X)\) is affordable by Prym varieties,
  either \(\dim B_r = 0\) or there are subgroups \(H\) and \(N\) of \(G\) such
  that \(\pr(\pi_N^H) \sim B_r\).

  Suppose that \(\dim B_r = 0\). Lemma~\ref{lem:dim-B_i} implies that
  \(\dim B_r = 0\) if and only if
  \(g-1 + \frac{1}{2}\smallsum_{j=1}^{r-1} b_j = 0\). The latter implies that
  either \(g = 1\) and the signature of \(\pi_G\) is as in
  item~\ref{item:sign_con} or \(g = 0\) and
  \(\frac{1}{2}\smallsum_{j=1}^{r-1} b_j = 2\); namely, there are just two
  integers different of \(p\) in the signature of \(\pi_G\) (see
  equation~\eqref{eq:signature}). Consider a generating vector
  \((c_1,\ldots,c_s)\) of \(G\) with \(g=0\) and just two elements out of
  \(\tra\), say \(c_1\) and \(c_2\). Item~\ref{item:RET-prod} of
  Proposition~\ref{prop:RET} implies that
  \(c_1^{-1} = c_2\smallprod_{i=3}^s c_i\), hence \(c_1\) and \(c_2\) generate
  conjugates of the same subgroup \(\lin_d\) of \(G\). Item~\ref{item:RET-gen}
  of Proposition~\ref{prop:RET} states that the elements \(c_1,\ldots, c_r\) must
  generate \(G\); therefore \(c_1\) and \(c_2\) must generate conjugates of
  \(\lin\) and hence their order is \(q-1\). This implies
  item~\ref{item:sign_con}.

  Now suppose that \(\dim B_r \geq 1\). In the notation of
  Corollary~\ref{cor:Prym-decom}, for a Prym \(\pr(\pi_N^H)\) to contain
  \(B_r\) it is necessary for \(H\) to be of type \((1,j)\) and for \(N\) to be
  of type \((e,k)\) with \(e > 1\). Since we are not interested in \(B_{r+1}\),
  set \(k \coloneq n\) and \(j \coloneq n\), so Corollary~\ref{cor:Prym-decom} yields
  \begin{equation}
    \label{eq:B_i-as-pr}
    \pr(\pi_N^H) \sim \prod_{e \nmid d_i} B_i.
  \end{equation}
  Consider a multiple \(\delta\) of \(e\), if \(e \nmid d_i\), then
  \(\delta \nmid d_i\); hence each \(B_i\) contained in \(\pr(\pi_N^H)\) would also be
  contained if \(N\) were of type \((\delta,n)\) instead of \((e,n)\). Thereby, for
  the left hand side of equation~\eqref{eq:B_i-as-pr} to have the least number
  of components possible, we assume that \(e\) is a prime number. Since \(q-1\)
  is not a power of a prime, it is not possible that all divisors \(d_i\) of
  \(q-1\) but \(1\) are divisible by \(e\); that is, there is at least one
  divisor \(d_i\) of \(q-1\) such that \(e \nmid
  d_i\). Equation~\eqref{eq:B_i-as-pr} implies that \(\dim B_i = 0\) for all
  \(i=2,\ldots, r-1\) such that \(e \nmid d_i\). We separate three cases depending on
  the genus of \(Y\):
  \begin{itemize}
  \item According to Lemma~\ref{lem:dim-B_i}, if \(g \geq 2\), then \(B_i\) is
    non-trivial for all \(i=2,\ldots,r\). A contradiction.
  \item If \(g=1\), then \(\smallsum_{j=1}^{r-1} b_j \geq 1\); hence there is at
    least one \(b_j \geq 1\), say \(b_k\). Set \(d \coloneq d_k\),
    Lemma~\ref{lem:dim-B_i} implies that \(\dim B_i> 0\) for all \(i\) such
    that \(d \nmid d_i\); hence \(d \mid d_i\) for all \(i=2,\ldots,r\) such that
    \(e \nmid d_i\). Thus \(q-1 = d^\mu e^\nu\) for positive integers \(\mu\) and
    \(\nu\), and \(d\) must be a prime different from \(e\). We now prove that
    \(b_k\) is the only positive \(b_j\). Suppose that \(b_{k'} \geq 1\). Set
    \(d' \coloneq d_{k'}\), then \(d' \mid d_i\) for all \(i=2,\ldots,r\) such that
    \(e \nmid d_i\); in particular \(d' \mid d\), hence \(d' = 1\) or
    \(d' = d\). But \(d'\) is the order of a ramification point, so \(d' = d\)
    and \(k' = k\). This implies item~\ref{item:mix_con}.
  \item If \(g=0\), then \(\smallsum_{j=1}^{r-1} b_j \geq 3\). Set \(\delta\) such that
    \(q-1 = \delta e^\nu\) with \(e \nmid \delta\). For each prime divisor
    \(d_i\) of \(\delta\) we have \(\dim B_i = 0\), hence
    \(\smallsum_{d_j \nmid d_i} b_j = 2\), set \(d\coloneq d_i\). This implies that
    \(\smallsum_{e^\nu \mid d_j} b_j \leq 2\), since no multiple of \(e^\nu\) divides
    \(d\). We now prove that the sum \(\smallsum_{e^\nu \mid d_j} b_j\) is exactly
    \(2\). Consider a generating vector \((c_1,\ldots,c_s)\) of \(G\) with
    \(g=0\). Item~\ref{item:RET-gen} of Proposition~\ref{prop:RET} implies that
    \(\gen{c_1,\ldots,c_n} = G\), hence
    \(\gen{c_1,\ldots,c_n} \tra / \tra \cong \lin\). Besides, we have
    \(\gen{c_1,\ldots,c_n} \tra / \tra \cong \lin_{\lcm_{1\leq i\leq s}\{\ord{c_i}\}}\); thus
    \(\lcm_{1\leq i \leq s}\{\ord{c_i}\} = q-1\) and at least one \(c_i\) in
    \(\{c_1,\ldots,c_n\}\) has order a multiple of \(e^\nu\), say
    \(c_1\). Item~\ref{item:RET-prod} of Proposition~\ref{prop:RET} implies
    that \(c_1^{-1} = c_2\smallprod_{i=3}^s c_i\), so \(\{c_2,\ldots,c_n\}\)
    generates \(G\); so, by a similar argument, we can assume that \(c_2\) has
    order a multiple of \(e^\nu\). Therefore
    \(\smallsum_{e^\nu \mid d_j} b_j = 2\). Since
    \(\smallsum_{j=1}^{r-1} b_j \geq 3\), there must be another \(b_j >
    1\). Since\(\smallsum_{e^\nu \mid d_j} b_j = 2\), it is necessary that
    \(b_j = 0\) for all \(j\) but the one or two already described, for which
    \(d_j\) is a multiple of \(e^\nu\) (\(c_1\) and \(c_2\) may have the same
    order), and the one for which \(d_j = d\). Since \(d\) was chosen
    arbitrarily, the same is true for every other prime divisor of \(\delta\),
    therefore \(d\) must be the only one, namely \(\delta = d^\mu\). We have
    \(\gen{c_1,c_3,\ldots,c_s}=\gen{c_2,c_3,\ldots,c_s} = G\) and
    \(\lcm_{3\leq i \leq s}\{\ord{c_i}\} \mid dq\), hence, if \(\mu > 1\), the elements
    \(c_1\) and \(c_2\) must have order \(d^\mu e^\nu = q-1\). If \(\mu = 1\), then
    \(c_1\) and \(c_2\) may have order \(e^\nu\) or \(de^\nu = q-1\). This implies
    item~\ref{item:mix_con}.
  \end{itemize}

  Now we prove the sufficiency of the conditions in
  items~\ref{item:q_con}~to~\ref{item:mix_con}. Theorem~\ref{cor:Prym-decom}
  implies that \(B_{r+1} \sim \pr(\pi_G^{\lin})\). Also, we have
  \(B_1 \sim \J(Y)\). This two isogenies yield the group algebra decomposition by
  Prym varieties for \(q=2\) and for the cases in item~\ref{item:sign_con}, for
  which \(B_i\) is trivial for all \(i=2,\ldots,r\); namely
  \begin{equation}
    \label{eq:simple-decomp}
    \J(X) \sim \J(Y) \times \pr(\pi_G^{\lin}).
  \end{equation}
  For all other cases in items~\ref{item:q_con} and~\ref{item:mix_con}, there
  are non-trivial components \(B_i\) for \(i\in\{2,\ldots,r\}\). In
  Table~\ref{tab:decomposition}, we give subgroups \(H_i\) and \(N_i\) such
  that \(H_i\subset N_i\) and \(B_i \sim \pr\paren[\big]{\pi_{N_i}^{H_i}}\) for each
  \(i\in\{2,\ldots,r\}\) with non-trivial \(B_i\); the computations follow directly
  from Theorem~\ref{cor:Prym-decom}. For simplicity, the trivial components are
  omitted. \qedhere

  \begin{table}
    \caption{Group algebra decomposition of \(\J(X)\) by Prym varieties.}
    \label{tab:decomposition}
    \begin{tabular}{@{} *6l  @{}}
      \toprule
      \(q\) & Signature & Decomposition & \(d_i\) & \(H_i\) & \(N_i\)\\
      \midrule
      \(9\) & any & \(B_1 \times B_2 \times B_3 \times B_4 \times B_5^8\)
                                        & \(4\) & \(\tra \rtimes \lin_4\)
                                                            & \(G\)\\
            &&& \(2\) & \(\tra \rtimes \lin_2\) & \(\tra \rtimes \lin_4\)\\
            &&& \(1\) & \(\tra\) & \(\tra \rtimes \lin_2\)\\
      \addlinespace
      \(2^\nu+1\) & any & \(\paren{\smallprod_{i=1}^{\nu+1} B_i} \times B_{\nu+2}^{2^\nu}\)
                                        & \(2^k\) &
                                                    \(\tra\rtimes\lin_{2^k}\)
                                                            &
                                                              \(\tra\rtimes\lin_{2^{k+1}}\)\\
      \addlinespace
      \(2^n\) & any & \(B_1\times B_2 \times B_3^{2^n-1}\) & \(2^n-1\) & \(\tra\) &
                                                                          \(G\)\\
      \addlinespace
      \(d^\mu e^\nu+1\) & eq.~\eqref{eq:mix_con} & \(B_1 \times \paren{\smallprod_{d_i \mid
                                               e^\nu} B_i} \times
                                               B_{\mu\nu+\mu+\nu+2}^{d^\mu e^\nu}\)
                                        & \(e^k\) & \(\tra\rtimes\lin_{e^k}\) &
                                                                          \(\tra\rtimes\lin_{e^{k+1}}\)\\
      \bottomrule
    \end{tabular}
  \end{table}
\end{proof}

\begin{corollary}[Of the proof]
  \label{cor:MT}
  If the group algebra decomposition of \(\J(X)\) is affordable by Prym
  varieties, then its decomposition is given in Table~\ref{tab:decomposition}
  or by equation~\eqref{eq:simple-decomp}.
\end{corollary}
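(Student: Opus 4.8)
The plan is to reduce everything to representation theory and dimension counts. Write the group algebra decomposition as $\J(X) \sim \bigl(\prod_{j=1}^r B_j\bigr) \times B_{r+1}^{q-1}$; two components are automatically Prym varieties, since $B_1 \sim \J(Y)$ and, applying Corollary~\ref{cor:Prym-decom} to $\lin \subset G$, $B_{r+1} \sim \pr(\pi_G^\lin)$. So affordability is equivalent to: each $B_i$ with $2 \le i \le r$ is trivial or isogenous to some $\pr(\pi^H_N)$. Expanding $\rho_H - \rho_N$ with Lemma~\ref{lem:rho_H}, it is a sum of $\xi_\delta$'s over $\delta \mid q-1$ with $e \mid \delta$, $d \nmid \delta$ (for $H$ of type $(e,j)$, $N$ of type $(d,k)$), plus a multiple of $\st$. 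Using Corollary~\ref{cor:rep_isolation} — together with the fact that zero-dimensional components simply drop out of the isogeny, so $B_i$ need not be cut out exactly on the level of representations — I will show that $\pr(\pi^H_N) \sim B_r$ with $B_r$ nontrivial forces (barring degenerate ramification, treated directly) $H$ conjugate to $\tra$ and $N$ to $\tra \rtimes \lin_e$ for some prime $e \mid q-1$, whence $\pr(\pi^{\tra}_{\tra\rtimes\lin_e}) \sim \prod_{e \nmid d_i} B_i$. Thus $B_r$ is a Prym exactly when some prime $e$ isolates it, i.e.\ $B_i$ vanishes for every divisor $d_i$ of $q-1$ strictly between $1$ and $q-1$ not divisible by $e$.

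The vanishing is then arithmetic: by Lemma~\ref{lem:dim-B_i} the totient factor is never zero, so $\dim B_i = 0 \iff (g-1) + \tfrac12 \sum_{d_j \nmid d_i} b_j = 0$, which already forces $g \le 1$ and a very sparse signature. This powers the necessity direction. Assume affordability and that item~\ref{item:q_con} fails; I first check that $p^n - 1$ is a prime power only for $q \in \{2,9\}$, for Fermat primes, and for $q = 2^n$ with $2^n-1$ a Mersenne prime — the case $n \ge 2$ using Mihăilescu's theorem — so $q-1$ has at least two distinct prime factors and the gcd of the divisors of $q-1$ lying strictly between $1$ and $q-1$ equals $1$. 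If $\dim B_r \ge 1$, then $B_r$ is isolated by a prime $e$; since that gcd is $1$ this rules out $g \ge 2$ (where every $B_i$ is nontrivial), so $g \le 1$, and substituting the isolation condition into the dimension identity yields $q-1 = d^\mu e^\nu$ and exactly the signatures of item~\ref{item:mix_con}; in the case $g=0$ I additionally invoke Proposition~\ref{prop:RET}, projecting a generating vector to $\lin \cong \ZZ/(q-1)$ — the images multiply to $1$ and must generate $\lin$ — to bound the orders of the two "non-$p$" branch points and to explain the restriction $\mu=1$. If instead $\dim B_r = 0$, the dimension identity leaves $g=1$ unramified or $g=0$ with exactly two non-$p$ branch points, and the same generating-vector argument forces both of those orders to equal $q-1$: item~\ref{item:sign_con}.

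For sufficiency I run this in reverse. When $q-1=\ell^b$ is a prime power, or item~\ref{item:sign_con} or~\ref{item:mix_con} holds, Lemma~\ref{lem:dim-B_i} pinpoints which $B_i$ are nonzero, and each is realized explicitly — $B_i \sim \pr\bigl(\pi^{\tra\rtimes\lin_{d_i}}_{\tra\rtimes\lin_{d_i\ell}}\bigr)$ in the prime-power case, and $B_i \sim \pr\bigl(\pi^{\tra\rtimes\lin_{e^k}}_{\tra\rtimes\lin_{e^{k+1}}}\bigr)$ (with $\tra\rtimes\lin_{e^\nu d}$ at the top stage) in the mixed case — checking via Corollary~\ref{cor:Prym-decom} that the remaining components occurring in each Prym are trivial. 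Together with $B_1 \sim \J(Y)$ and $B_{r+1} \sim \pr(\pi_G^\lin)$ this gives the decomposition, which I collect in a table.

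The step I expect to be hardest is the genus-$0$ part of necessity: there the dimension formula alone does not kill enough components, so it must be combined with the Riemann existence constraints of Proposition~\ref{prop:RET} — the $\lin$-images of a generating vector multiply to $1$ and the lcm of their orders is $q-1$ — to show that at most two branch points can carry an order divisible by the "$e$-part" of $q-1$, pinning the signature down to the short list. A secondary nuisance is the purely number-theoretic input on when $p^n-1$ is a prime power, which is where Mihăilescu's (Catalan's) theorem is needed.
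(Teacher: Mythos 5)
Your proposal is correct and follows essentially the same route as the paper: the same reduction via Lemma~\ref{lem:rho_H}, Corollary~\ref{cor:Prym-decom} and Lemma~\ref{lem:dim-B_i}, the same generating-vector and Mih\u{a}ilescu arguments for necessity, and the same explicit subgroups \(H_i \subset N_i\) for sufficiency. (Your choice of \(\tra\rtimes\lin_{e^\nu d}\) at the top stage of the mixed case is in fact a small improvement on the literal entry of Table~\ref{tab:decomposition}, where \(\lin_{e^{k+1}}\) fails to be a subgroup when \(k=\nu\).)
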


\bibliography{references}
\end{document}